\title{Title}
\theoremstyle{plain}
\newtheorem{theorem}{Theorem}[section]
\newtheorem{lemma}[theorem]{Lemma}
\newtheorem{definition}{Definition}[section]
\newtheorem{corollary}[theorem]{Corollary}
\newcommand{\vol}{{\rm Vol}}
\newcommand{\dist}{{\rm dist}}
\newcommand{\R}{\mathbb{R}}
\definecolor{RED}{rgb}{1,0,0}\definecolor{BLUE}{rgb}{0,0,1} 
\begin{document}
\title{On non-parallel cylinder packings}
\author{Ofek Eliyahu}
\maketitle

\begin{abstract}In this paper we will discuss optimal lower and upper density of non-parallel cylinder packings in $\mathbb R^3$ and similar problems. The main result of the paper is a proof of K. Kuperberg's conjecture for upper density (existence of a non-parallel cylinder packing with upper density ${\pi}/{\sqrt{12}}$).
Moreover, we prove that for every $\varepsilon>0$ there exists a non-parallel cylinder packing with lower density greater then $\frac{\pi}{6}-\varepsilon$.
\end{abstract}

\section{Introduction}
Denote by $B^n_r(x_0)\subset \mathbb R^n$ the ball of radius $r$  (with respect to the Euclidean metric) with center at a point $x_0$. 
Let $\mathcal C=\{C_{i}\}_{i=1}^{\infty}$ be congruent disjoint bodies in $\R^{n}$. We define 
\[
\delta^{+}({\mathcal C})=\limsup_{r\to\infty}\frac{\vol\big(B^n_r(0)\cap\bigcup_{i=1}^{\infty} (C_i)\big)}{\vol (B^n_r(0))},
\]
and
\[
\delta^{-}({\mathcal C})=\liminf_{r\to\infty}\frac{\vol\big(B^n_r(0)\cap\bigcup_{i=1}^{\infty} (C_i)\big)}{\vol(B^n_r(0))}.
\]
Also, provided the limit exists, we define  
\[
\delta({\mathcal C})=\lim_{r\to\infty}\frac{\vol\big(B^n_r(0)\cap\bigcup_{i=1}^{\infty} (C_i)\big)}{\vol(B^n_r(0))}.
\]

From now we consider only the values $n=3$  and $n=2$, and for $n=2$ we will denote $\vol$ by ${\rm Area}$. We say that a circle packing in $\R^{2}$ is a \textbf{lattice packing} if the centers of the circles form a lattice. In 1773, Joseph-Louis Lagrange proved that the highest-density lattice packing of circles is the hexagonal lattice. In 1942, L{\'a}szl{\'o} Fejes T{\'o}th proved that this packing is optimal among all circle packings (and that it's density  is $\frac{\pi}{\sqrt{12}}\approx0.9068$). A proof of that can be found in \cite{chang2010simple}.

Let $\ell$ be a line in $\R^3$, and let $r>0$. The  \textbf{infinite circular cylinder with axis $\ell$ and radius $r$} is the set of all points in $\R^{3}$ that lie at distance smaller than $r$ from $\ell$. Two infinite cylinders are said to be \textbf{parallel} if their axes are parallel. A collection $\mathcal C$ of infinite cylinders with the same radius is called a \textbf{cylinder packing} if the  interiors of the cylinders in $\mathcal C$ are pairwise disjoint. A cylinder packing is called a \textbf{non-parallel cylinder packing} if no two cylinders in it are parallel.
\smallskip

In 1989, A. Bezdek and W. Kuperberg, \cite{bezdek1990maximum}, proved that for every cylinder packing ${\mathcal C}=\{C_{i}\}_{i=1}^{\infty}$ one has $\delta^{+}({\mathcal C})\leq\frac{\pi}{\sqrt{12}}$. It is clear that this bound is tight among all cylinder packings. In 1990, K. Kuperberg, \cite{kuperberg1990nonparallel}, proved that there exists a non-parallel cylinder packing ${\mathcal C}=\{C_{i}\}_{i=1}^{\infty}$ with $\delta^{-}({\mathcal C})>0$  (for her packing, $\delta^{-}({\mathcal C})=\frac{\pi^{2}}{576}\approx0.0171$). She conjectured that the bound $\frac{\pi}{\sqrt{12}}$ is also tight among non-parallel cylinder packings, and that there exists a non-parallel cylinder packing with density $\frac{\pi}{\sqrt{12}}$ (she did not specify whether she meant lower or upper density).
 
One can define two more notions of density:
\[
(\delta^{*})^{+}({\mathcal C})=\limsup_{r\to \infty} \left( \sup_{x_0 \in \R^n }\frac{\vol\big(B^n_r(x_0)\cap\bigcup_{i=1}^{\infty} (C_i)\big)}{\vol(B^n_r(0))}\right)
\]
and
\[
(\delta^{*})^{-}({\mathcal C})=\liminf_{r\to \infty} \left(\inf_{x_0 \in \R^n}\frac{\vol\big(B^n_r(x_0)\cap\bigcup_{i=1}^{\infty} (C_i)\big)}{\vol(B^n_r(0))}\right)
\]
In some cases these two quantities are more natural than $\delta^{+}$ and $\delta^{-}$  because they are invariant under translations  and do not assign a special role to the origin. It is not hard to see that for every cylinder packing ${\mathcal C}=\{C_{i}\}_{i=1}^{\infty}$ one has that  $(\delta^{*})^{+}({\mathcal C})\geq\delta^{+}({\mathcal C})$ and $(\delta^{*})^{-}({\mathcal C})\leq\delta^{-}({\mathcal C})$.
But in fact one can do a reduction to the result of Bezdek and Kuperberg and show that $(\delta^{*})^{+}({\mathcal C})\leq\frac{\pi}{\sqrt{12}}$ for every cylinder packing ${\mathcal C}=\{C_{i}\}_{i=1}^{\infty}$.
\smallskip

In 1997, Claudia and Peter Graf, \cite{graf1997moglichst}, showed that there exists a non-parallel cylinder packing ${\mathcal C}=\{C_{i}\}_{i=1}^{\infty}$ with $\delta^{-}({\mathcal C})=\frac{5}{12}$ . 
In 2014, T. Hales  proved that the maximal density of sphere packings is $\frac{\pi}{3\sqrt{2}}\approx0.7404$. His proof used complex computer calculations, and solved the Kepler conjecture, which was one of the most challenging open problems on this topic \cite{hales2017formal}. In 2018, D. Ismailescu and P. Laskawiec, \cite{ismailescu2019dense}, showed that there exists a non-parallel cylinder packing ${\mathcal C}=\{C_{i}\}_{i=1}^{\infty}$  with $\delta^{-}({\mathcal C})=\frac{1}{2}$. They conjectured that for every non-parallel cylinder packing ${\mathcal C}=\{C_{i}\}_{i=1}^{\infty}$ there exists a sequence of holes with radius tending to infinity, i.e., there exist sequences $b_{n},r_{n}\rightarrow\infty$ as ${n\rightarrow\infty}$ such that for all $ n\in\mathbb{N}$ one has that $B(r_{n},b_n)\cap(\bigcup_{i=1}^{\infty} (C_i))=\emptyset$, which implies that $(\delta^{*})^{-}({\mathcal C})=0$ for every non-parallel cylinder packing ${\mathcal C}=\{C_{i}\}_{i=1}^{\infty}$.

In Section 2 of this paper, we   prove that for every $\varepsilon>0$ there exists a non-parallel cylinder packing ${\mathcal C}=\{C_{i}\}_{i=1}^{\infty}$ with $\delta^{-}({\mathcal C})\geq\frac{\pi}{6}-\varepsilon$ (\cref{first result}).
In Section 3, we show that there exists a non-parallel cylinder packing with $\delta^{+}({\mathcal C})=\frac{\pi}{\sqrt{12}}$ (in particular, $(\delta^{*})^{+}=\frac{\pi}{\sqrt{12}}$), i.e., a non-parallel cylinder packing with optimal upper density. This proves the aforementioned conjecture of K. Kuperberg. In fact we show a more general result, namely, we construct  a  non-parallel cylinder packing from a lattice circle packing, such that the upper density of the cylinder packing equals the density of the original circle packing. 

From now, we denote by O the origin, both in $\mathbb R^2$ and both in $\mathbb R^3$.

\section{Existence of a cylinder packing with higher lower density} 
\begin{theorem}
\label{prepare for first result}
For every $\varepsilon>0$, there are $K,L>0$ which satisfy $L^{2}=K^{2}+1$ and $r_{0}>0$ such that the following holds: Let   $A_1=(x_1,y_1), A_2=(x_2,y_2)\in \R^{2}$ be arbitrary points with integer Euclidean norms $
d_1=\|A_1\|$ and $d_2=\|A_2\|$ which satisfy the two  conditions
\[
d_1,d_2 \geq{r_{0}}
\]
and
\[
\widetilde{c}\geq{\frac{1+\varepsilon}{\max{(d_1,d_2)}}},
\]


\noindent where   $\widetilde{c}$ is the angle between the lines that pass through the points $O$ and $A_1$  and  $O$ and $A_2$,  respectively, i.e., $\widetilde{c}=\angle{A_{1}OA_{2}}$.

Then 
\[
\dist(\ell_{1},\ell_{2})\geq{1},
\]
where  $\ell_{1}=\{(x_{1},y_{1},0)+t(y_{1},-x_{1},Kd_{1}+L): t \in \mathbb{R}\}$, 
$\ell_{2}=\{(x_{2},y_{2},0)+t(y_{2},-x_{2},Kd_{2}+L): t \in \mathbb{R}\}$
\end{theorem}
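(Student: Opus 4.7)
My plan is to apply the standard formula for the distance between skew lines and reduce the desired inequality $\dist(\ell_1,\ell_2)\ge1$ to an algebraic inequality in $K,L,d_1,d_2,\widetilde c$ that can be verified by choosing $K,L,r_0$ appropriately. Taking $P_i=(x_i,y_i,0)$ and $v_i=(y_i,-x_i,Kd_i+L)$ as the basepoint and direction vector of $\ell_i$, a direct computation using $x_i^2+y_i^2=d_i^2$, $x_1x_2+y_1y_2=d_1d_2\cos\widetilde c$, and $|x_1y_2-x_2y_1|=d_1d_2\sin\widetilde c$ yields
\[
(P_2-P_1)\cdot(v_1\times v_2)=Kd_1d_2(d_1+d_2)(1-\cos\widetilde c)+L\|A_2-A_1\|^2,
\]
which is manifestly nonnegative, and
\[
\|v_1\times v_2\|^2=\|K(d_1A_2-d_2A_1)+L(A_2-A_1)\|^2+d_1^2d_2^2\sin^2\widetilde c.
\]

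Setting $u=1-\cos\widetilde c$ and $s=\|A_2-A_1\|^2=(d_1-d_2)^2+2d_1d_2u$, and using the identities $\|d_1A_2-d_2A_1\|^2=2d_1^2d_2^2u$ and $(d_1A_2-d_2A_1)\cdot(A_2-A_1)=d_1d_2(d_1+d_2)u$, the inequality $\dist(\ell_1,\ell_2)\ge1$ reduces, after squaring and using $L^2-K^2=1$, to
\[
K^2d_1^2d_2^2u\bigl((d_1+d_2)^2u-2\bigr)+2KLd_1d_2(d_1+d_2)u(s-1)+L^2s(s-1)\ge d_1^2d_2^2u(2-u).
\]

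The last step is to verify this algebraic inequality under the hypotheses. Using $1-\cos\widetilde c\ge\tfrac{\widetilde c^2}{2}(1-\tfrac{\widetilde c^2}{12})$ together with $\widetilde c\ge(1+\varepsilon)/\max(d_1,d_2)$, and the integer separation (so that either $d_1=d_2$ or $|d_1-d_2|\ge1$), I would split the analysis. When $d_1=d_2=d$ one gets $s=2d^2u\ge(1+\varepsilon)^2(1-o(1))$ and $(d_1+d_2)^2u-2\ge 2(2\varepsilon+\varepsilon^2)(1-o(1))$, so the $K^2$ term alone beats the right-hand side provided $K^2(2\varepsilon+\varepsilon^2)>1$; this fixes the scale $K=K(\varepsilon)$. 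When $|d_1-d_2|\ge1$, either the contribution $(d_1-d_2)^2$ pushes $s$ much larger than $1$ (so $L^2s(s-1)$ easily swamps the right-hand side), or $d_1$ and $d_2$ are close enough that the analysis of the previous case still applies with a little slack. Accordingly I would choose any $K>1/\sqrt{2\varepsilon+\varepsilon^2}$, set $L=\sqrt{K^2+1}$, and take $r_0=r_0(\varepsilon)$ large enough that all $o(1)$ corrections coming from the Taylor expansions of $\cos$ and $\sin$ are absorbed into the strict inequality $K^2(2\varepsilon+\varepsilon^2)>1$.

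The main obstacle is this last algebraic verification: the coefficient $(d_1+d_2)^2u-2$ of the leading $K^2$ term is not uniformly positive (it can be negative when $d_1\ll d_2$), so one must combine it carefully with the positive $L^2s(s-1)$ contribution and the mixed $KL$ cross term. The integer separation $|d_1-d_2|\ge1$ is what keeps $s$ bounded away from $0$ in the off-diagonal case, and the boundary case $d_1=d_2$ with $\widetilde c$ close to $(1+\varepsilon)/d_1$ is what determines the required magnitude of $K$ as a function of $\varepsilon$.
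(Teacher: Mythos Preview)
Your approach is essentially the same as the paper's: both compute the skew-line distance (the paper quotes the resulting formula from Ismailescu--Laskawiec, while you rederive it in a slightly cleaner form), reduce $\dist(\ell_1,\ell_2)\ge 1$ to a polynomial inequality in $K,L,d_1,d_2,u=1-\cos\widetilde c$, and then split into the cases $d_1=d_2$ and $|d_1-d_2|\ge 1$, the diagonal case fixing the scale of $K$ via the Taylor bound on $1-\cos$ and the off-diagonal case being handled by a further case analysis that exploits the integer separation to make $s=\|A_1-A_2\|^2\ge 1$. The paper carries out exactly the multi-case verification you flag as the ``main obstacle'' (its $\widetilde\Delta$, $\widetilde{\!\!\widetilde\Delta}$ decomposition plays the role of your $K^2$-, $KL$-, and $L^2$-terms), so your outline would complete to the same proof; one minor remark is that the relation $L^2=K^2+1$ is not actually used in your reduction step but only later, to guarantee $K/L\to 1$ when $K$ is taken large.
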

 
\begin{proof}
Let $\varepsilon>0$. By Ismailescu and Laskawiec's calculation  in~\cite{ismailescu2019dense}, Lemma 3.4, we have that  
\newline
\newline

(1) \ \ \ \ $$\quad({\rm dist}(\ell_{1},\ell_{2}))^{2}=$$ \[\\
\begin{aligned}
&\frac{(1-c)^{2}d_{1}^{2}d_{2}^{2}\gamma^{2}+2L(1-c)d_{1}d_{2}\gamma(d_{2}-d_{1})^{2}+L^{2}(d_{2}-d_{1})^{4}}{-(1-c)^{2}d_{1}^{2}d_{2}^{2}+2(1-c)d_{1}d_{2}[L^{2}(1+d_{1}d_{2})+KL(d_{1}+d_{2})]+L^{2}(d_{2}-d_{1})^{2}},
\end{aligned}
\]
where $c=\cos\widetilde{c}$ and $\gamma = (Kd_{1}+Kd_{2}+2L)$.
The constants $L,K$ will be chosen to satisfy: 
\begin{enumerate}
\item \[K^{2}>\frac{1+0.25\varepsilon}{1+0.5\varepsilon}L^{2}\] \underline{} 
\item \[\forall h>0: \ \ \ \frac{1+2\varepsilon +\frac{\varepsilon^{2}}{2}}{2(1+h)}(2+h)^{2}K^{2}\geq{\frac{1+2\varepsilon}{2}\cdot{\frac{(2+h)^{2}}{1+h}}L^{2}}\]

\item 
\[K^{2}\geq{0.99L^{2}}\]
\end{enumerate}
which is clearly satisfied for $L$ large enough since $L^{2}=K^{2}+1$.
Then, the constant $r_{0}$ will be chosen such that for every $d_{1}>r_{0}$:
\begin{enumerate}[label=(\roman*)]
    \item \[
    1-\cos\left(\frac{1+\varepsilon}{d_{1}}\right)>\frac{1+0.25\varepsilon}{2d_{1}^{2}}\]

    by Taylor expansion of order 2 of the function $\cos{x}$, clearly this is satisfied for $r_{0}$ large   enough for all $d_{1}\geq{r_{0}}$.
    
    \item \[L^{2}+2KLd_{1}<0.25\varepsilon{L^{2}d_{1}^{2}}\] 
    \item \[\forall h>0\]  \[\left[(1+2\varepsilon)\left(2+\frac{h^{2}}{2(1+h)}\right)+4h^{2}-2h-2-\frac{2}{d_{1}^{2}}\right] \\ 
    {\geq}{3.999\varepsilon+4h^{2}-2h}\]
    
    \item \[
\frac{1}{100}[L^{2}\delta^{2}d_{1}^{2}(\delta^{2}d_{1}^{2}-1)]{\geq}{2.5L^{2}d_{1}^{2}}.
\] which clearly holds when $r_{0}$ is large enough for all $d_{1}\geq{r_{0}}$.
\end{enumerate}
We consider two separate cases.

\underline{Case 1:} $d_{1}=d_{2}$, note that in this case  \[
\begin{aligned}
\quad({\rm dist}(\ell_{1},\ell_{2}))^{2}=&\frac{(1-c)^{2}d_{1}^{4}\gamma^{2}}{-(1-c)^{2}d_{1}^{4}+2(1-c)d_{1}^{2}[L^{2}(1+d_{1}^{2})+2KLd_{1}]}\\
&\geq\frac{(1-c)^{2}d_{1}^{4}\gamma^{2}}{2(1-c)d_{1}^{2}[L^{2}(1+d_{1}^{2})+2KLd_{1}]}\\
&=\frac{(1-c)d_{1}^{2}\gamma^{2}}{2[L^{2}(1+d_{1}^{2})+2KLd_{1}]}\\
&\geq\frac{(1-\cos(\frac{1+\varepsilon}{d_{1}})d_{1}^{2}\gamma^{2}}{2[L^{2}(1+d_{1}^{2})+2KLd_{1}]},
\end{aligned}
\]
\[
\begin{aligned}
\underset{(i)}\geq \frac{{\frac{1+0.5\epsilon}{2d_1^2}}d_1^2\gamma^2}{2[L^2(1+d_1^2)+2KLd_1]},
\end{aligned}
\]
\[
\begin{aligned}
\underset{(1)}{\geq}&\frac{{\frac{1+0.25\epsilon}{2d_1^2}}d_1^2(2Ld_1)^2}{2[L^2(1+d_1^2)+2KLd_1]}\\
&\underset{(ii)}{\geq}\frac{2L^2d_1^2(1+0.25\epsilon)}{2L^2d_1^2(1+0.25\epsilon)}=1,
\end{aligned}
\]

which completes the proof for this case. \newline
\underline{Case 2:} $d_{1}\ne{d_{2}}$. From (1), an algebraic manipulation shows that  $\dist(\ell_{1},\ell_{2})\geq{1}$ if and only if $\varDelta\geq{0}$, where
\begin{align*}
&\varDelta = (1-c)^{2}d_{1}^{2}d_{2}^{2}\big[1+\gamma^{2}\big]+L^{2}(d_{2}-d_{1})^{2}\big[(d_{2}-d_{1})^{2}-1)\big]\\
&~~+2(1-c)d_{1}d_{2}\big[KL(d_{1}+d_{2})((d_{2}-d_{1})^{2}-1)+L^{2}(2d_1^{2}-5d_{1}d_{2}+2d_1^{2}-1)\big].    
\end{align*}
Denote 
\[
(2) \ \ \ \ \widetilde\varDelta=(1-c)d_{1}d_{2}(1+\gamma^{2})+2L^{2}(2d_{1}^{2}-5d_{1}d_{2}+2d_1^{2}-1)
\]
and
\[
\widetilde{\!\!\widetilde\varDelta}= \left[\frac{1}{(1-c)d_{1}d_{2}}L^{2}\big(d_{2}-d_{1}\big)^{2}+2KL\big(d_{2}+d_{1}\big)\right] \big[(d_{2}-d_{1})^{2}-1\big].
\]
Then $\varDelta=(1-c)d_{1}d_{2}(\widetilde\varDelta+\widetilde{\!\!\widetilde\varDelta})$. For every $d_1\ne{d_2}$ we have $(d_{2}-d_{1})^{2}-1\geq{0}$ (since $d_{1},d_{2}$ are integers), so 
$~\widetilde{\!\!\widetilde\varDelta}\geq{0}$. Consequently, if $\widetilde\varDelta\geq{0}$, then $\varDelta\geq{0}$.
\smallskip

Assume without loss of generality that $d_{1}<d_{2}$. Denote $d_{2}=(1+h)d_{1}$, where $h>0$. Then
\begin{align*}
    \widetilde\varDelta&\geq{\left[1-\cos\frac{1+\varepsilon}{(1+h)d_{1}}\right]}\cdot{(1+h)d_{1}^{2}\big(K(2+h)d_{1}\big)^{2}}\\
    &\qquad +2L^{2}\big[d_{1}^{2}(2+2(1+h)^{2}-5(1+h))-1\big]\\
    &\underset{\rm Taylor}{=}\frac{1}{2}{\left[\frac{1+\varepsilon}{(1+h)d_{1}}\right]^{2}\cos\widetilde{x}}\big(1+h)^{2}d_{1}^{2}((2+h)Kd_{1}\big)^{2}\\
    &\qquad +2L^{2}\big[d_{1}^{2}(2h^{2}-h-1)-1\big],  
\end{align*}
where  $0\leq{\widetilde{x}}\leq\displaystyle{{\frac{1+\varepsilon}{(1+h)d_{1}}}}$.
Note that $\widetilde{x}\rightarrow 0$ when $d_{1}\rightarrow \infty$,  and since $\lim_{x\to {0}}\cos{x}=1$, for $d_{1}$ large enough  we have $(1+\varepsilon)^{2}\cos{x}>\displaystyle{1+2\varepsilon +\frac{\varepsilon^{2}}{2}}$. It follows that  

\begin{align*}
    \widetilde\varDelta&\geq{\frac{1+2\varepsilon+\frac{\varepsilon^{2}}{2}}{2(1+h)^{2}d_{1}^{2}}}(1+h)d_{1}^{2}(2+h)^{2}d_{1}^{2}K^{2}+2L^{2}\big[d_{1}^{2}(2h^{2}-h-1)-1\big]\\
    &=\frac{1+2\varepsilon +\frac{\varepsilon^{2}}{2}}{2(1+h)}(2+h)^{2}d_{1}^{2}K^{2}+L^{2}d_{1}^{2}\left[2(2h^{2}-h-1)-\frac{2}{d_{1}^{2}}\right]\\
    &\underset{(2)}\geq{\frac{1+2\varepsilon}{2}\cdot{\frac{(2+h)^{2}}{1+h}}L^{2}d_{1}^{2}+L^{2}d_{1}^{2}\left[2(2h^{2}-h-1)-\frac{2}{d_{1}^{2}}\right]}\\
    &= L^{2}d_{1}^{2}\left[(1+2\varepsilon)\left(2+\frac{h^{2}}{2(1+h)}\right)+4h^{2}-2h-2-\frac{2}{d_{1}^{2}}\right]\\
    &\underset{ (iii)}{\geq}{L^{2}d_{1}^{2}(3.999\varepsilon+4h^{2}-2h)}.
\end{align*}
We see that for $L$ and $d_{1}$ large enough, (3)
$\widetilde\varDelta\geq{L^{2}d_{1}^{2}(3.999\varepsilon+4h^{2}-2h)}$.  
Now  let $\delta\geq{0}$ be such that for every $0\leq{h}\leq{\delta}$ we have  $4h^{2}-2h\geq{-\varepsilon}$.
Then, for every $0\leq{h}\leq{\delta}$ we get 
\[
\varDelta= (1-c)d_{1}d_{2}(\widetilde\varDelta+\widetilde{\widetilde\varDelta})\geq{(1-c)d_{1}d_{2}\widetilde\varDelta}\geq{(1-c)(3.99\varepsilon-\varepsilon)}\geq{0}.
\]
For $h\geq{\delta}$ we consider two  cases:
\begin{enumerate}
\item  If $(1-c)d_{1}d_{2}\leq{100}$, then 
\[
\widetilde{\widetilde\varDelta}\geq{\frac{1}{100}[L^{2}\delta^{2}d_{1}^{2}(\delta^{2}d_{1}^{2}-1)]}\underset{ (iv)}{\geq}{2.5L^{2}d_{1}^{2}}.
\] 
Since for every $h\geq{0}$ the inequality $4h^{2}-2h\geq{-2}$ holds, we get that
\begin{align*}
    \varDelta&=(1-c)d_{1}d_{2}(\widetilde\varDelta+\widetilde{\widetilde\varDelta})\geq{(1-c)d_{1}d_{2}L^{2}d_{1}^{2}(2.5+3.99\varepsilon +4h^{2}-2h)}\\
    &\geq{(1-c)d_{1}d_{2}L^{2}d_{1}^{2}(0.5+4h^{2}+3.99\varepsilon)}\geq{0}.
\end{align*}
\item If $(1-c)d_{1}d_{2}\geq{100}$, we again consider  two separate cases:
\begin{enumerate}
\item If $h\leq{1}$, then, by the defnition of $\widetilde\varDelta$  \ (2), we get that
\begin{align*}
\widetilde\varDelta\geq{100(1+\gamma^{2})-2L^{2}\cdot{10d_{1}^{2}}}\underset{ (3)}\geq{99L^{2}d_1^{2}-20L^{2}d_1^{2}}\geq{0},
\end{align*}
and so $\varDelta\geq{0}$.
\item If $h>1$, then $4h^{2}-2h\geq{2h}\geq{0}$, and using (3) we get that
\[
\varDelta\geq{(1-c)d_{1}d_{2}\widetilde\varDelta}\geq{(1-c)L^{2}d_{1}^{2}(3.99\varepsilon+4h^{2}-2h)}\geq{0}.
\]
\end{enumerate}
\end{enumerate}

Thus, for every $h>{0}$ we have $\varDelta\geq{0}$, which completes the proof. \end{proof}

\begin{theorem} For every $\varepsilon>0$, there exists a non-parallel cylinder packing ${\mathcal C}=\{C_{i}\}_{i=1}^{\infty}$ with lower density $\delta^{-}({\mathcal C})\geq{\frac{\pi}{6}-\varepsilon}$.
\label{first result}
\end{theorem}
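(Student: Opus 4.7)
Fix $\varepsilon>0$ and apply Theorem~\ref{prepare for first result} to obtain constants $K,L,r_0>0$ such that any two points $A_1,A_2\in\mathbb{R}^2$ with integer norms $d_i=\|A_i\|\ge r_0$ and $\widetilde{c}_{A_1A_2}\ge(1+\varepsilon)/\max(d_1,d_2)$ produce lines $\ell_{A_1},\ell_{A_2}$ at distance at least $1$.

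\textbf{Construction of the axes.} I would select a set $\mathcal{A}\subset\mathbb{R}^2$ of points with integer norms $\ge r_0$ such that for any two distinct $A_1,A_2\in\mathcal{A}$, either (a) $A_1,A_2,O$ are collinear (so $\widetilde{c}_{A_1A_2}=0$) with distinct integer norms, in which case the direct calculation from formula (1) with $c=\pm1$ shows $\dist(\ell_1,\ell_2)=|d_1-d_2|\ge1$, or (b) $\widetilde{c}_{A_1A_2}\ge(1+\varepsilon)/\max(d_1,d_2)$, in which case Theorem~\ref{prepare for first result} provides $\dist(\ell_1,\ell_2)\ge1$. For each $A=(x,y)\in\mathcal{A}$ let $\ell_A$ be the line through $(x,y,0)$ with direction $(y,-x,K\|A\|+L)$, and let $C_A$ be the cylinder of radius $1/2$ around $\ell_A$.

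\textbf{Validity.} The above case analysis shows that $\dist(\ell_A,\ell_{A'})\ge1$ for all distinct pairs, so the cylinders of radius $1/2$ have pairwise disjoint interiors. Distinct points $A\ne A'$ produce non-parallel axes: if $\|A\|\ne\|A'\|$ then the vertical components $K\|A\|+L$ and $K\|A'\|+L$ differ; if $\|A\|=\|A'\|$ but $A\ne A'$ then the horizontal components $(y,-x)$ and $(y',-x')$ differ. Hence $\mathcal{C}=\{C_A\}_{A\in\mathcal{A}}$ is a non-parallel cylinder packing.

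\textbf{Density.} A cylinder $C_A$ with axis at distance $d=\|A\|$ from $O$ intersects the ball $B_R(0)$ in volume $\approx\frac{\pi}{4}\cdot2\sqrt{R^2-d^2}=\frac{\pi}{2}\sqrt{R^2-d^2}$ when $d\le R$. Writing $N_d$ for the number of points of $\mathcal{A}$ on the circle of radius $d$,
\[
\delta^-(\mathcal{C})=\liminf_{R\to\infty}\frac{1}{\frac{4\pi R^3}{3}}\cdot\frac{\pi}{2}\sum_{d=r_0}^R N_d\sqrt{R^2-d^2}.
\]
The plan is to arrange $\mathcal{A}$ so that $N_d$ is comparable to $2\pi d/(1+\varepsilon)$ (the maximum allowed by the within-circle condition) for most $d\in[r_0,R]$. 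A natural design is to use a common base grid of angles in $[0,2\pi)$ with spacing $(1+\varepsilon)/R$ and select on circle $d$ every $\lceil R/d\rceil$-th base angle; same-angle collisions across circles are then handled by the collinear case above, and one verifies the required angular separations by choosing phases per circle (or by a slight derandomization of a random phase choice). With this arrangement one obtains $\sum_d N_d\sqrt{R^2-d^2}\sim\frac{2\pi}{1+\varepsilon}\int_{r_0}^R d\sqrt{R^2-d^2}\,dd=\frac{2\pi R^3}{3(1+\varepsilon)}(1+o(1))$, and dividing by $\vol(B_R)=\frac{4\pi R^3}{3}$ gives the desired lower bound on $\delta^-$.

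\textbf{Main obstacle.} The hardest step is the combinatorial construction of $\mathcal{A}$: the inter-circle constraint $\widetilde{c}\ge(1+\varepsilon)/\max(d_1,d_2)$ is most restrictive for pairs of circles whose radii are in a small ratio (say $d_2\le 2d_1$), where the naive evenly-spaced base grid produces forbidden close pairs, and one must carefully adjust phases or thin out $N_d$ on such layers, accepting a modest factor of loss in the density count. Carrying out this design in a way that preserves enough of the theoretical $\pi/(4(1+\varepsilon))$ from the unrestricted count to yield at least $\pi/6-\varepsilon$ is the technical heart of the argument.
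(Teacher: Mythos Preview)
Your overall framework matches the paper's: invoke Theorem~\ref{prepare for first result}, handle collinear points separately, verify the packing is non-parallel, then estimate the density. Your treatment of the collinear case and of non-parallelism is fine, and your volume formula $\tfrac{\pi}{2}\sqrt{R^2-d^2}$ is essentially what the paper obtains, though by a cleaner route: the paper observes (Lemma~\ref{dual circle packing density}) that rotating each cylinder about the line $OA$ until it is vertical is an isometry fixing the origin, so the 3D density equals the 2D density of the ``dual'' circle packing of radius-$\tfrac12$ discs centred at the points of $\mathcal A$.

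The genuine gap is exactly where you flag it: you have not constructed $\mathcal A$. Your ``natural design'' uses a base angular grid of spacing $(1+\varepsilon)/R$, but $R$ is the variable in the $\liminf$, so this cannot define a fixed packing; and the phase-adjustment/thinning you propose is left entirely unspecified. The paper resolves this with a dyadic nesting trick: for each integer $n\ge r_0$, let $k$ satisfy $2^k\le n<2^{k+1}$ and place on the circle of radius $n$ the points at angles $j(1+\varepsilon)/2^k$, $1\le j\le \lfloor 2\pi\cdot 2^k/(1+\varepsilon)\rfloor$. The key feature is that the angle set used in block $k$ is a \emph{subset} of the angle set in block $k+1$, so for any two non-collinear points $A_1,A_2$ with $\|A_2\|=\max(\|A_1\|,\|A_2\|)$ lying in block $k_2$, the angle $\widetilde c$ is automatically a nonzero integer multiple of $(1+\varepsilon)/2^{k_2}\ge(1+\varepsilon)/\|A_2\|$, and Theorem~\ref{prepare for first result} applies directly with no further adjustment. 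The price is that $N_n\approx 2\pi\cdot 2^k/(1+\varepsilon)$ is only about half of your ideal $2\pi n/(1+\varepsilon)$ near the top of each dyadic block; computing the 2D density of the dual circle packing along $r=2^m$ (via Stolz) gives exactly $\pi/(6(1+\varepsilon))$, and a short monotonicity argument shows this subsequence realises the $\liminf$. This built-in factor-of-two loss is precisely what yields $\pi/6$ rather than the $\pi/4$ you were aiming for.
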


Construction: Let $\varepsilon>{0}$, and let $L,K,r_{0}$ be three (large enough) constants  such that the statement of \cref{prepare for first result} holds for these values. For each $n\geq{r_{0}}$, let $k$ be the (unique) natural number such that $2^{k}\leq{n}<{2^{k+1}}$. We define

\[
{\mathcal A}_{n}=\Biggl\{ n\left( \cos{\frac{\left( 1+\varepsilon \right) \cdot{j}}{2^{k}}},\sin{\frac{\left( 1+\varepsilon \right) \cdot{j}}{2^{k}}},0 \right)   \\\  \Bigl| 1\leq{j}\leq{\frac{2\pi}{1+\varepsilon}\cdot{2^{k}}} \Biggl\}
\]

Now for every $n\geq{r_{0}}$  we draw from every point $A=(x,y,0) \in {\mathcal A}_{n}$ the line $\ell_{A}=(x,y
,0)+t(y,-x,Kn+L)$, and then  around each such line we take the cylinder of radius $\frac{1}{2}$. Let $C_{n}$ be the set of all the cylinders corresponding to $n$. For every 2 cylinders as above (which might correspond to the same $n$ or to distinct values of $n$), if they come from 2 points which lie on a straight line with the origin, the axes lie in 2 parallel planes which are perpendicular to the xy-plane and the distance between them is at least 1, so the distance between the axes is at least 1. Else, for every $n_{1}\geq{n_{0}}$, for $i=0,1$ let $k_{i}$ be the (unique) natural number such that $2^{k_{i}}\leq{n_{i}}<{2^{k_{i}+1}}$. Let $A_{0}\in {\mathcal A}_{0}$, $A_{1} \in {\mathcal A}_{1}$, and assume that $A_{0}$, $A_{1}$, O not lie on a straight line. Clearly that $\angle{A_{0}OA_{1}}$ is of the form $\frac{(1+\varepsilon)\cdot{m}}{2^{k_1}}$ for an integer $m$. Since $2^{k_{1}}\leq{\|A_1\|}$ the assumptions of \cref{prepare for first result} are satisfied. Then, by \cref{prepare for first result}, the distance between the axes is at least 1. So the cylinders are disjoint and it's a cylinder packing.  Denote the resulting cylinder packing by $\mathcal C$ (the union about all $n\geq{r_{0}}$). That is, $\mathcal C = \bigcup_{n\geq{r_{0}}} {C_{n}}$.

\begin{definition}
Let ${\mathcal C}=\{C_{i}\}_{i=1}^{\infty}$ be a cylinder packing such that the axes of all the cylinders in $C_i$ intersect the $(x,y)$-plane. The {\bf dual circle packing} of ${\mathcal C}$, denoted $\widetilde{{\mathcal C}}$, is the circle packing in the plane that consists of all the circles that are centered at the points where the axes of the cylinders $C_i$ intersect the $(x,y)$-plane and each circle has the same radius as the corresponding cylinder.
\end{definition}

\begin{definition}
Let $C$ be a cylinder with axis that intersects the $(x,y)$-plane, but not contained in this plane. The {\bf dual cylinder} of $C$, denoted $C^*$, has the same radius as $C$ and it's axis is the line that passes through the point where the axis of $C$ intersects the $(x,y)$-plane and is perpendicular to the $(x,y)$-plane.
\end{definition}

\begin{lemma}
\label{dual circle packing density}
Let ${\mathcal C}=\{C_{i}\}_{i=1}^{\infty}$ be a cylinder packing with the property that for every $i$ the axis of $C_i$ intersects the 
$(x,y)$-plane in a point $A_i$ and is perpendicular to the line $OA_i$. Let $\widetilde{{\mathcal C}}$ be the dual circle packing of ${\mathcal C}$. Then $\delta^{+}({\mathcal C})=\delta^{+}(\widetilde{{\mathcal C}})$ and  $\delta^{-}({\mathcal C})=\delta^{-}(\widetilde{{\mathcal C}})$. 
\end{lemma}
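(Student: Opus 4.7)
My plan is to derive an exact identity between the 3D cylinder volume inside $B_R^3(0)$ and a 2D integral over the dual circle packing, and then translate density comparisons via a weighted-average representation.

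First I would prove the per-cylinder identity
\[
\vol(C_i \cap B_R^3(0)) = \int_{\widetilde{C}_i \cap B_R^2(0)} 2\sqrt{R^2 - x^2 - y^2}\, dx\, dy.
\]
Parameterize $C_i$ as $A_i + sv_i + u$ with $u \in v_i^\perp$, $|u| < r_0$, where $v_i$ is the unit axis direction. The hypothesis $v_i \cdot A_i = 0$ gives
\[
|A_i + sv_i + u|^2 = d_i^2 + s^2 + |u|^2 + 2A_i \cdot u,
\]
so integrating out $s$ expresses the 3D volume as an integral of $2\sqrt{R^2 - d_i^2 - |u|^2 - 2A_i \cdot u}$ over the 2D disk $\{u \in v_i^\perp, |u| < r_0\}$ (with integrand set to zero when the argument under the root is negative). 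Choosing coordinates $(u_1, u_2)$ on $v_i^\perp$ with the first axis along $A_i/d_i$, the integrand depends only on $(u_1, u_2)$ in a form independent of the tilt $v_i$; an identical integral arises from the right-hand side in coordinates $(x, y) = A_i + (w_1, w_2)$, proving the identity.

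Summing over $i$ (using disjointness of both packings) gives $\vol(\mathcal{C} \cap B_R^3(0)) = \int_{\widetilde{\mathcal{C}} \cap B_R^2(0)} 2\sqrt{R^2 - |x|^2}\, dA$, and Fubini applied to the ball itself gives $\vol(B_R^3(0)) = \int_{B_R^2(0)} 2\sqrt{R^2 - |x|^2}\, dA = \tfrac{4\pi R^3}{3}$. Integration by parts against the monotone area function $A(r) := \ar(\widetilde{\mathcal{C}} \cap B_r^2(0))$ followed by the substitution $r = sR$ convert the ratio into the weighted-average formula
\[
\delta^{(3)}_R := \frac{\vol(\mathcal{C} \cap B_R^3(0))}{\vol(B_R^3(0))} = \int_0^1 \delta^{(2)}_{sR}\, w(s)\, ds,\qquad w(s) = \tfrac{3 s^3}{2\sqrt{1-s^2}},
\]
where $\delta^{(2)}_r = A(r)/(\pi r^2)$ and $w$ is a probability density on $[0,1]$ concentrated near $s = 1$.

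From this representation, the inequalities $\limsup_R \delta^{(3)}_R \le \limsup_R \delta^{(2)}_R$ and $\liminf_R \delta^{(3)}_R \ge \liminf_R \delta^{(2)}_R$ follow from a standard $\varepsilon$-$R_0$ split (using $\int_0^\eta w(s)\, ds = O(\eta^4)$). For the reverse inequalities I would exploit monotonicity of $A(r)$: if $\delta^{(2)}_{R_n}$ approaches $\limsup \delta^{(2)}$ along $R_n \to \infty$, then $\delta^{(2)}_r \ge \delta^{(2)}_{R_n}(R_n/r)^2$ for $r \ge R_n$, so $\delta^{(2)}$ persists near its peak over a multiplicative interval of length $1 + \Theta(\eta)$, and for $R$ chosen just above $R_n$ the weighted average $\delta^{(3)}_R$ captures this persistence region whose $w$-mass near $s = 1$ is $\Theta(\sqrt{\eta})$. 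The main obstacle, in my view, is exactly this quantitative matching: the weight $w$ smooths $\delta^{(2)}_r$, so extracting the sharp limsup and liminf of $\delta^{(2)}$ from $\delta^{(3)}$ requires carefully balancing the monotonicity of $A(r)$ against the rate at which the $w$-mass accumulates near $s = 1$.
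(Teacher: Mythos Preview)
Your per-cylinder identity is correct, and it is exactly the content of the paper's key step, but the paper gets it without any computation: the rotation $T$ about the line $OA_i$ carries the axis of $C_i$ to the vertical line through $A_i$, hence carries $C_i$ to the dual (vertical) cylinder $C_i^*$; since $O$ lies on the rotation axis, $T$ fixes the origin and therefore preserves every ball $B_R^3(0)$, giving $\vol(C_i\cap B_R^3(0))=\vol(C_i^*\cap B_R^3(0))$ at once. Your coordinate calculation rediscovers this equality, but the rotation makes the role of the perpendicularity hypothesis transparent and removes all of the integration.

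Your weighted-average formula $\delta^{(3)}_R=\int_0^1 \delta^{(2)}_{sR}\,w(s)\,ds$ for the vertical case is correct, and the easy inequalities $\limsup\delta^{(3)}\le\limsup\delta^{(2)}$ and $\liminf\delta^{(3)}\ge\liminf\delta^{(2)}$ follow as you indicate. The obstacle you flag for the reverse inequalities, however, is genuine and cannot be overcome along your route: monotonicity of $A(r)$ only yields $\delta^{(2)}_r\ge\delta^{(2)}_{R_n}(R_n/r)^2$ for $r\ge R_n$, which pins $\delta^{(2)}$ near its peak on a multiplicative window of bounded length, while the kernel $w$ has full support on $(0,1]$. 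Indeed, a circle packing whose $\delta^{(2)}_r$ is asymptotically periodic in $\log r$ gives a vertical cylinder packing for which the convolution strictly shrinks the oscillation, so $\limsup\delta^{(3)}<\limsup\delta^{(2)}$ there. The paper does not pass through your formula; it disposes of the vertical case by a one-line Fubini sketch (slices $z=a$ together with two ``negligible domes''), which avoids the weighted-average language but is subject to the same objection. What actually makes the paper's applications go through is that in each construction $\delta^{(2)}_r$ remains near its extreme values over multiplicative intervals whose length tends to infinity, which overwhelms the fixed-width smoothing by $w$.
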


\begin{proof} If the axes of all the cylinders $C_i$ are perpendicular to the $(x,y)$-plane, the lemma follows from Fubini's theorem - For every plane of the form z=a, the intersection of the plane with a ball which centered at the origin is a circle, and the intersection of the circle with the cylinders is a lift of the intersection of $\widetilde{{\mathcal C}}$ with the projection of the circle to the $(x,y)$-plane, then we can use the density of $\widetilde{{\mathcal C}}$ in all circles expect in 2 domes, which are negligible for big radius of the ball. In the general case,  we claim that for every cylinder $ C_{i}$ and every $r>0$, 
\[
\vol(C_{i}\cap{B^3_{r}(0)})=\vol(C^*_{i}\cap{B^3_{r}(0)}),
\]
which clearly will complete the proof.

To prove the claim, we note that the axis of the dual  cylinder $C^*_{i}$ is the image of the axis of $C_{i}$ under a rotation-transformation $T$ around $OA_i$. Since $T$ is an isometry,  $d(x,\ell)=d(T(x),T(\ell))$   for every line $\ell$,   in particular for $\ell_{A_i}$. Denote $\ell=\ell_{A_i}$ for simplicity. Therefore, if $d(x,\ell)<r$, then $d(T(x),T(\ell)<r$, and so if $d(y,T(\ell))<r$ we have
\[
d(T^{-1}(y),T^{-1}(T(\ell)))\underset{T ~{\rm is}~1-1}{=}d((T^{-1}(y),\ell)<r.
\]
Hence, $C^*_{i}=T(C_{i})$, and since $O$ lies on $OA_i$, necessarily $T(O)=O$. Moreover,  since $T$ is an isometry,   $\|(T(x))\|= \| x \|$ for all $x\in\mathbb R^3$, so $C^*_{i}\cap{B^3_r(0)}=T(C_{i}\cap{B^3_r(0)})$. We conclude that $\vol( C^*_{i}\cap{B^3_r(0)})=\vol(T(C_{i}\cap{B^3_r(0)})$, which completes the proof in this case.
\end{proof}

\begin{proof}[Proof of \cref{first result}]

Let us calculate the lower density of the dual circle packing $\widetilde {\mathcal C}$ in the plane which consists of the circles with radius $1/2$ centered at the points in $\bigcup_{j=r_{0}}^{\infty}({\mathcal A}_{j})$. By \cref{dual circle packing density},  $\delta^{+}({\mathcal C})=\delta^{+}(\widetilde{\mathcal C})$ and $\delta^{-}(\mathcal C)=\delta^{-}(\widetilde{\mathcal C})$. Since in every annulus of the form $D_{r} = {B^2_{r+1}(0)}/B^2_r(0)$ for $2^{n}< r \leq{2^{n+1}-1}$ there is the same number of centers of circles of the circle packing and $vol(D_{r})$ is monotonically increasing in $r$, if the limit  
\[
\lim_{n\to\infty}\frac{{\rm Area}(B^2_{2^{n}}(0)\cap \widetilde {\mathcal C})}{{\rm Area}(B^2_{2^{n}}(0))}
\] 
exists, then it is equal to the lim\,inf.

Since for $2^{n}< k \leq{2^{n+1}}$ the density of centers (number of centers of circles of the circle packing divided by the area) in annulus of the form $D_{k} = {B^2_{k}(0)}/B^2_{2^{n}}(0)$ is decreasing as a function of k, it's easy to show that the density of centers in $B^2_{k}(0)$ is at least the minimum of the densities in $B^2_{2^{n}}(0)$ and $B^2_{2^{n+1}}(0)$, so we can't get a lower partial limit.

Let  ${\mathcal U}_n$ denote the union of the circles in  $\widetilde{\mathcal C}$ with center of norm $r$ such that $2^n\le r<2^{n+1}$. Then  since
\begin{align*}
    &\lim_{n\to\infty}\frac{{\rm Area} ({\mathcal U}_n)}{{\rm Area}(B^2_{2^{n+1}}(0))-{\rm Area}(B^2_{2^{n}}(0))}\\
    &\qquad\qquad\qquad= \lim_{n\to\infty}\frac{\frac{\pi}{4}\cdot{2^{n}}\cdot{\frac{2}{1+\varepsilon}}\cdot{2^{n}}}{3\pi\cdot{2^{2n}}}\\
    &\qquad\qquad\qquad=\frac{\pi}{6(1+\varepsilon)},
\end{align*}
Stolz's theorem implies that:
\begin{enumerate}[label=(\roman*)]
\item \[
\lim_{n\to\infty}\frac{{\rm Area}(B^2_{2^{n}}(0)\cap \widetilde {\mathcal C})}{{\rm Area}(B^2_{2^{n}}(0))}=\frac{\pi}{6(1+\varepsilon)}.
\]
\end{enumerate}
It follows that $\delta^{-}(\mathcal C)=\frac{\pi}{6(1+\varepsilon)}{\rightarrow}\frac{\pi}{6}$ as $\varepsilon\to 0$, which completes the proof.   
\bigskip
\end{proof}
\begin{theorem}
\label{calc upper density}
The upper density of the previous construction is $\delta^{+}(C)= \frac{3\pi}{16(1+\varepsilon)}.$
\end{theorem}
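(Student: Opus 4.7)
The plan is to apply Lemma~\ref{dual circle packing density} to reduce the problem to the planar setting, then compute the upper density of the dual circle packing $\widetilde{\mathcal C}$ directly. The hypothesis of the lemma is satisfied: in the construction each cylinder $C_i$ has axis direction $(y_i,-x_i,Kn+L)$, whose inner product with $OA_i=(x_i,y_i,0)$ is zero. Hence $\delta^{+}(\mathcal C)=\delta^{+}(\widetilde{\mathcal C})$, and it suffices to compute the latter.

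In contrast to Theorem~\ref{first result}, where only the subsequence $r=2^n$ was considered, I would track the density of $\widetilde{\mathcal C}$ inside $B^2_r(O)$ for all large real $r$. Writing $r=t\cdot 2^n$ with $t\in[1,2)$, the full annuli $[2^m,2^{m+1})$ with $m<n$ contain approximately $\frac{2\pi}{1+\varepsilon}\cdot 4^m$ centers each, summing to $\sim\frac{2\pi}{3(1+\varepsilon)}\cdot 4^n$ centers in $B^2_{2^n}(O)$; the partial annulus $[2^n,r]$ contributes another $\sim(t-1)\cdot\frac{2\pi}{1+\varepsilon}\cdot 4^n$ centers, since every circle along it has the same angular spacing $\frac{1+\varepsilon}{2^n}$. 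Each circle has area $\pi/4$, while ${\rm Area}(B^2_r(O))=\pi t^2\cdot 4^n$, so as $n\to\infty$ with $t$ fixed the density converges to
\[
g(t)\;=\;\frac{\pi}{2(1+\varepsilon)}\cdot\frac{t-2/3}{t^2},\qquad t\in[1,2].
\]

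I would then optimize $g$ on $[1,2]$. A direct computation gives $g'(t)=\frac{\pi}{2(1+\varepsilon)}\cdot\frac{4/3-t}{t^3}$, so the unique critical point is $t=4/3$, with $g(4/3)=\frac{3\pi}{16(1+\varepsilon)}$; at the endpoints, $g(1)=g(2)=\frac{\pi}{6(1+\varepsilon)}$, in agreement with the lower-density value from Theorem~\ref{first result}. Taking the subsequence $r_n=(4/3)\cdot 2^n$ shows $\delta^{+}(\widetilde{\mathcal C})\geq g(4/3)$, while for any large $r$ the density is at most $g(r/2^{\lfloor\log_2 r\rfloor})+o(1)\leq g(4/3)+o(1)$, yielding the matching upper bound.

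The only real difficulty is the bookkeeping: controlling the $O(r)$-order errors arising from the floor function in $|\mathcal A_j|$, from the truncation of annuli at integer radii, and from circles whose interiors only partially intersect $B^2_r(O)$. All of these are of lower order than the $\Theta(r^2)$ leading term and drop out in the limit, so no new conceptual ingredient is needed beyond the counting already performed in the proof of Theorem~\ref{first result}.
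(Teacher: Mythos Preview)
Your proposal is correct and follows essentially the same route as the paper. Both arguments reduce to the dual circle packing via Lemma~\ref{dual circle packing density}, parametrize radii as $r=t\cdot 2^n$ (the paper writes $t=1+c$ with $c\in[0,1]$), obtain the same density profile $g(t)=\frac{\pi(t-2/3)}{2(1+\varepsilon)t^2}=\frac{\pi(1+3c)}{6(1+\varepsilon)(1+c)^2}$, and maximize it to find $t=4/3$ (i.e.\ $c=1/3$). The only cosmetic difference is that the paper isolates the step ``uniform convergence along covering subsequences $\Rightarrow$ $\limsup=\max$ of subsequential limits'' into a separate lemma (Lemma~\ref{calculus 1 subsequences lemma}), whereas you handle the matching upper and lower bounds on $\delta^+$ inline; neither version requires any idea the other lacks.
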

\begin{proof}

let $c$,  $0\leq{c}\leq{1}$, be an arbitrary  constant and consider the subsequence $n_{k}=2^{k}(1+c)$. Then, using (i) we get that
\[
\begin{aligned}
&\qquad\qquad\qquad \lim_{n\to\infty}\frac{{\rm Area}(B^2_{n_{k}}(0)\cap \widetilde {\mathcal C})}{{\rm Area}(B^2_{n_{k}}(0))} \\
&=\lim_{n\to\infty}\frac{{\rm Area}(B^2_{2^{k}}(0)\cap \widetilde {\mathcal C})+{\rm Area}(B^2_{n_{k}}(0))\cap \widetilde {\mathcal C}/B^2_{2^{k}}(0))}{{\rm Area}(B^2_{n_{k}}(0))} \\
&=\frac{\frac{\pi}{6(1+\varepsilon)}\cdot{{\rm Area}(B^2_{2^{k}}(0))}(1+o(1))+2^{k}\cdot{c}\cdot{2^{k}}\cdot{\frac{\pi}{4}}\cdot{\frac{2\pi}{1+\varepsilon}}}{\pi(2^{k}(1+c))^{2}} \\
&\qquad\qquad\qquad\qquad=\frac{\pi\cdot(1+3c)}{6(1+\varepsilon)(1+c)^{2}} .
\end{aligned}
\]
\end{proof}
\begin{lemma}
\label{calculus 1 subsequences lemma}
Let $\mathcal \{a_n\}_{n=1}^{\infty}$ be a sequence,
and let $A$ be a set of convergent subsequences of $\mathcal \{a_n\}_{n=1}^{\infty}$
which cover the sequence. Assume that for every $\varepsilon>0$ there exists $M>0$ such that for every $n>M$ and every subsequence in A which includes ${a_n}$, $|{a_n}-l|<\varepsilon$ where l is the limit of the subsequence. Let B be the set of partial limits of subsequences in $A$, then B has a maximum and
\begin{align*}
    \limsup_{n\to\infty} {a_{n}} = \max {B}
\end{align*}
\end{lemma}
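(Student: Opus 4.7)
My plan is to prove $\limsup_{n\to\infty} a_n = \sup B$ via two inequalities, then argue that $\sup B$ is attained.

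For $\sup B \le \limsup_{n\to\infty} a_n$: every $l \in B$ is the limit of a subsequence in $A$, hence a partial limit of $\{a_n\}$, so $l \le \limsup_{n\to\infty} a_n$. Taking the supremum over $l \in B$ gives the inequality.

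For the reverse inequality, fix $\varepsilon > 0$ and let $M$ be as in the hypothesis. Since $A$ covers the sequence, every $n > M$ lies in some $S \in A$ whose limit $l_S$ belongs to $B$ and satisfies $|a_n - l_S| < \varepsilon$. Hence $a_n < l_S + \varepsilon \le \sup B + \varepsilon$ for all $n > M$, so $\limsup_{n\to\infty} a_n \le \sup B + \varepsilon$; letting $\varepsilon \to 0$ closes the gap.

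For attainment, set $L = \sup B = \limsup_{n\to\infty} a_n$ and pick a subsequence $a_{n_k} \to L$. For each $k$, choose $S_k \in A$ containing $a_{n_k}$, with limit $l_k \in B$. Applying the hypothesis with a shrinking $\varepsilon$ as $k \to \infty$ yields $|a_{n_k} - l_k| \to 0$, and hence $l_k \to L$. If some $S \in A$ appears as $S_k$ for infinitely many $k$, then $l_k$ is eventually equal to the single limit of $S$, forcing $L \in B$. The main obstacle is when the $S_k$ are pairwise distinct: one then only concludes that $L$ is a limit point of $B$, so to deduce $L \in B$ one needs a closure property of $B$. In the paper's intended use this is automatic, because $B$ is the image $\{f(c): c \in [0,1]\}$ of a continuous function on a compact interval, and so compactness of $B$ makes the supremum a maximum.
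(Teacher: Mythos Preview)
The paper gives no proof of this lemma---it is left as an exercise to the reader---so there is nothing to compare your approach against. Your two inequalities establishing $\limsup_{n\to\infty} a_n = \sup B$ are correct and cleanly argued.

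Your hesitation about the attainment $\sup B \in B$ is well founded: as stated, the lemma is actually false. For a counterexample, partition $\mathbb N$ into disjoint infinite sets $N_1, N_2, \ldots$ and set $a_n = 1 - 1/k$ for $n \in N_k$; take $A$ to consist of the constant subsequences $(a_n)_{n\in N_k}$. Then the uniform hypothesis holds trivially (each $a_n$ already equals the limit of the unique subsequence in $A$ containing it), $B = \{1 - 1/k : k \ge 1\}$, and $\limsup a_n = \sup B = 1 \notin B$, so $B$ has no maximum. Thus the attainment step genuinely requires extra structure on $B$. Your rescue is the right one for the paper's purposes: in the application immediately following the lemma, $B = \{f(c) : c \in [0,1]\}$ with $f(c) = \frac{\pi(1+3c)}{6(1+\varepsilon)(1+c)^2}$ continuous, so $B$ is compact and $\sup B = \max B$. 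In short, you have proved exactly what the paper needs, and you have correctly diagnosed that the lemma, taken at face value, overstates what the hypotheses imply.
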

\begin{proof}
the proof is easy and it's an exercise to the reader.
\end{proof}

Since  for every $\varepsilon>0$ there exists a $k_{0}\in{\mathbb{N}}$, uniform in $c$, such that 
\[
\left|\frac{{\rm Area}(B^2_{n_{k}}(0)\cap \widetilde {\mathcal C})}{{\rm Area}(B^2_{n_{k}}(0))}-\frac{\pi\cdot(1+3c)}{6(1+\varepsilon)(1+c)^{2}}\right|<\varepsilon,
\]
for every 
$k\geq{k_{0}}$, we can use \cref{calculus 1 subsequences lemma} and conclude that
\begin{align*}
    \delta^{+}(C)&=\limsup_{r\to\infty}\frac{\vol(B^3_r(0)\cap \widetilde {\mathcal C})}{\vol (B^3_r(0))}\\
    &=\max_{0\leq{c}\leq{1}}\frac{\pi\cdot(1+3c)}{6(1+\varepsilon)(1+c)^{2}}\\
    &=\frac{3\pi}{16(1+\varepsilon)} 
\end{align*}
(In order to calculate the maximum we looked at the derivative and set it equal to 0).
\begin{corollary}
Let $\mathcal C=\{C_{i}\}_{i=1}^{\infty}$ be a cylinder packing with the property that for every $i$ the axis of $C_i$ intersects the 
$(x,y)$-plane in a point $A_i$ such that the axis is perpendicular to the line $OA_i$, and the property that the set of the intersection points of the axes with the $(x,y)-plane$ satisfy:
\begin{enumerate}
\item They have integer norm.
\item The distance between any two of the points is at least 1.

\end{enumerate}
Then, $\delta^{+}({\mathcal C})\leq {\frac{3\pi}{16}}$.
\end{corollary}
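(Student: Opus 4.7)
The plan is to reduce the statement to a 2D computation and then to mimic the proof of \cref{calc upper density}. First, by \cref{dual circle packing density}, $\delta^+(\mathcal C) = \delta^+(\widetilde{\mathcal C})$, where $\widetilde{\mathcal C}$ is the dual circle packing of disks of radius $1/2$ centered at the points $A_i$. So I only need to bound the upper density of $\widetilde{\mathcal C}$ in the plane. Let $P(n)$ denote the number of centers of $\widetilde{\mathcal C}$ on the circle of radius $n$ and set $f(N)=\sum_{n=1}^N P(n)$.

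Next I would exploit the integer-norm hypothesis to collapse the $\limsup$ to integer scales. Since every $A_i$ has integer norm, the map $r\mapsto \operatorname{Area}(\widetilde{\mathcal C}\cap B^2_r(O))$ is, up to boundary contributions of order $r$, constant on each interval $[N,N+1)$, while $\pi r^2$ grows strictly. Hence the density ratio $\rho(r)$ is strictly decreasing on each such interval, so that
\[
\delta^+(\widetilde{\mathcal C})=\limsup_{N\to\infty,\,N\in\mathbb N}\rho(N)=\limsup_{N\to\infty}\frac{f(N)}{4N^{2}}.
\]
The pairwise distance-$1$ condition restricts each shell: two points on the circle of radius $n$ have chord separation at least $1$, so their angular gap is at least $2\arcsin(1/(2n))$, giving the clean bound $P(n)\le \pi/\arcsin(1/(2n))$, which is asymptotically $2\pi n$.

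With these ingredients I would imitate \cref{calc upper density}: evaluate $\rho(r)$ along the subsequence $r_k=2^k(1+c)$, $c\in[0,1]$, decompose $f(\lfloor r_k\rfloor)$ into the part coming from circles of radius $\le 2^k$ and the part from the partial annulus $(2^k,r_k]$, apply the above bound on each $P(n)$, and finally invoke \cref{calculus 1 subsequences lemma} (the subsequences $\{r_k\}_{k}$ indexed by $c\in[0,1]$ cover the sequence of integer radii up to an $o(1)$ error uniform in $c$) to express $\delta^+(\widetilde{\mathcal C})$ as $\max_{0\le c\le 1}$ of the resulting rational function of $c$. Setting the derivative to zero should reproduce the critical value $c=1/3$ and the bound $3\pi/16$, matching the calculation already carried out for the construction.

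The step I expect to be the main obstacle is the last one: the naive shell bound $P(n)\le 2\pi n$ alone yields only the weaker estimate $\delta^+\le \pi/4$. To obtain the sharper $3\pi/16$, the cylinder-packing hypothesis (which is strictly stronger than planar distance-$1$ separation of the $A_i$, as an inspection of \cref{prepare for first result} shows: nearby axes must compensate for small base-point separation by the vertical tilt) must be turned into a sharper angular-spacing bound on each shell, of the form $P(n)\le \alpha n$ with $\alpha$ small enough to reproduce $3\pi/16$ after the optimization over $c$. Extracting such an $\alpha$ from the full 3D packing condition, rather than from the planar one, is where the real work lies.
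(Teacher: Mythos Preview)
Your reduction via \cref{dual circle packing density} and your observation that the planar shell bound $P(n)\lesssim 2\pi n$ alone yields only $\delta^+\le \pi/4$ are both correct; this already shows the argument cannot be a straightforward rerun of \cref{calc upper density}. The gap you flag in your last paragraph is real, and the paper (which leaves the proof as an exercise ``similar to \cref{calc upper density}'') does not fill it either.

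Where your proposal breaks down is in the hope that the 3D packing hypothesis will supply the missing sharpening. You invoke \cref{prepare for first result}, but that theorem concerns only the special axis family $(y,-x,Kd+L)$, whereas the corollary as written allows \emph{any} axis perpendicular to $OA_i$---in particular the vertical direction $(0,0,1)$. For vertical cylinders of radius $\tfrac12$ the 3D packing condition coincides with the planar distance-$1$ condition on the $A_i$, so no sharpening is available; placing $\lfloor \pi/\arcsin(1/(2n))\rfloor\sim 2\pi n$ centers on each integer-radius circle $n$ (points on circles of consecutive integer radii are automatically at distance $\ge 1$) then gives a packing with $\delta^+\to\pi/4>3\pi/16$. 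Thus the corollary is false under its stated hypotheses, and the paper's hint must be read as tacitly restricting to the axis family of \cref{prepare for first result}. Even granting that restriction, your dyadic scheme $r_k=2^k(1+c)$ has no intrinsic meaning for an arbitrary packing---in \cref{calc upper density} it works only because the construction forces $P(n)$ to be constant on each block $[2^k,2^{k+1})$---so one would still have to extract an analogous structural constraint from the distance formula~(1) before the optimisation over $c$ could go through.
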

\begin{proof}
The proof is very similar to the proof of \cref{calc upper density}, and it's an exercise to the reader.
\end{proof}

\section{Existence of a non-parallel cylinder packing with upper density $\frac{\pi}{\sqrt{12}}$}
Consider the following  two sequences, defined recursively:
\begin{enumerate}
\item $a_{1}=1$, $a_{k+1}=100a_{k}$.
\smallskip
\item $T_{1}= 1$, $T_{k+1}=2^{10a_{k+1}}\cdot{T_{k}}$.
\end{enumerate}

For every $k\in\mathbb{N}$, define $\Omega_{k}=\{x\in \mathbb R^{2} |\, 2^{a_{k}} < {\| x \|} \leq {2^{2a_{k}}}\}$, and for every $x=(x_{1},y_{1},0)\in{\Omega_{k}}$, consider the straight line $\ell_{x}=\{(x_{1},y_{1},0)+t(y_{1},-x_{1},T_{k}): t \in \mathbb{R}\}$.
\begin{lemma}
\label{not parallel axises}
For any pair of distinct points $p_{1},p_{2}\in\bigcup_{k=1}^{\infty}\Omega_{k}$, the lines $\ell_{1}=\ell_{p_{1}}$ and $\ell_{2}=\ell_{p_{2}}$ are not parallel.
\end{lemma}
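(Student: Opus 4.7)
The plan is to exploit the very specific shape of the direction vector of $\ell_x$: for $x=(x_1,y_1,0)\in\Omega_k$, the direction vector is $(y_1,-x_1,T_k)$, so two such lines are parallel precisely when their direction vectors are proportional. I would split into two cases according to whether $p_1$ and $p_2$ lie in the same $\Omega_k$ or not.

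If $p_1,p_2\in\Omega_k$ for the same $k$, then both direction vectors have third coordinate $T_k$, so any proportionality constant between them must equal $1$. Comparing the first two coordinates then forces $p_1=p_2$, contradicting that $p_1,p_2$ are distinct. This case is essentially immediate.

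The substantive case is $p_1\in\Omega_{k_1}$, $p_2\in\Omega_{k_2}$ with $k_1<k_2$. Writing the direction-vector proportionality with ratio $\lambda$, the third coordinates give $\lambda=T_{k_2}/T_{k_1}$, and the first two coordinates then give $p_2=\lambda\, p_1$ as vectors in $\mathbb R^2$, whence $\|p_2\|=\lambda\|p_1\|$. Thus parallelism would force
\[
\frac{T_{k_2}}{T_{k_1}}=\frac{\|p_2\|}{\|p_1\|}.
\]
The whole point of the construction is that the left-hand side is astronomically large compared with the right-hand side. Indeed, iterating the recursion $T_{j+1}=2^{10a_{j+1}}T_j$ gives $T_{k_2}/T_{k_1}=2^{10\sum_{j=k_1+1}^{k_2}a_j}\ge 2^{10a_{k_2}}$, while the definition of $\Omega_{k_i}$ yields
\[
\frac{\|p_2\|}{\|p_1\|}<\frac{2^{2a_{k_2}}}{2^{a_{k_1}}}=2^{2a_{k_2}-a_{k_1}}<2^{10a_{k_2}},
\]
producing the contradiction.

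The only real content of the argument is the bookkeeping on exponents, and there is no genuine obstacle: the sequences $\{a_k\}$ and $\{T_k\}$ have been set up so aggressively that even the crudest bound $T_{k_2}/T_{k_1}\ge 2^{10a_{k_2}}$ already dominates the trivial bound $\|p_2\|/\|p_1\|<2^{2a_{k_2}-a_{k_1}}$. In particular the rapid growth $a_{k+1}=100a_k$ is not even needed here; only the factor $2^{10a_{k+1}}$ in the definition of $T_{k+1}$ is being used.
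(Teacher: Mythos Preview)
Your proof is correct and follows essentially the same approach as the paper: compare the proportionality constant forced by the third coordinates with the one forced by the first two, and derive a contradiction from the fact that $T_{k_2}/T_{k_1}$ vastly exceeds $\|p_2\|/\|p_1\|$. The paper phrases the cross-annulus estimate as a consecutive-step bound extended ``by induction,'' whereas you compute $T_{k_2}/T_{k_1}\ge 2^{10a_{k_2}}$ directly; this is a cosmetic difference only.
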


\begin{proof}
Assume, by contradiction, that $\ell_{1}$ and $\ell_{2}$ are parallel. Denote $p_{1} = (x_{1}, y_{1})$ and $p_{2} = (x_{2}, y_{2})$ and assume that $p_{1}\in{\Omega_{k_{1}}}$ and $p_{2}\in{\Omega_{k_{2}}}$, $k_{2}\geq{k_{1}}$. Then, there exists $ {c}\in\mathbb{R}$ such that $(y_{2},-x_{2},T_{k_{2}})=c(y_{1},-x_{1},T_{k_{1}})$.

If $k_{1}=k_{2}$, then $c=1$ and so $(x_{1},y_{1})=(x_{2},y_{2})$, a contradiction.

Now suppose that $k_1\neq k_2$. Then 
\[
\frac{T_{k_{2}}}{T_{k_{1}}}=c=\frac{\|(x_{2},y_{2}) \|}{\| (x_{1},y_{1}) \|}.\tag{$\ast$}
\]
For every $k\in\mathbb{N}$, and points $p_{1}\in{\Omega_{k}}$, $p_{2}\in{\Omega_{k+1}}$, $\frac{T_{k+1}}{T_{k}}=2^{1000a_{k}}$ and 
\[
\frac{\| (x_{2},y_{2}) \|}{\| (x_{1},y_{1}) \|}\leq{\|(x_{2},y_{2}) \|}\leq{2^{200a_{k}}}<2^{1000a_{k}}=\frac{T_{k+1}}{T_{k}}.
\] 
Hence, by induction, for any two points $x_{1}\in{\Omega_{k_{1}}}$ and   $x_{2}\in{\Omega_{k_{2}}}$ ($k_{2}>k_{1}$)  we have
\[
\frac{\| (x_{2},y_{2}) \|}{\| (x_{1},y_{1}) \|}<\frac{T_{k_{2}}}{T_{k_{1}}},
\]
which contradicts the equality in $(\ast)$. Therefore, $\ell_{1}$ and $\ell_{2}$ are not parallel.
\end{proof}

\begin{theorem}
\label{distance between axises}
There exists $k_{0}\in\mathbb{N}$ such that for every $\varepsilon>0$ there exists $m_{0}\in\mathbb{N}$ such that for any pair  $k_{2}>m_{0}$, $k_{1}>k_{0}$, $k_{2} > k_{1}$ and any pair of points  $A_{1}\in \Omega_{k_{1}}$ and $A_{2}\in \Omega_{k_{2}}$ for which \ \ \ \ 
 (1) \ \ \ \  $c = \cos{\theta}$ where $\theta$ is angle between the line that passes  through the points O and $A_{1}$ and the line that passes through O and $A_{2}$ satisfies $|c|\geq{\frac{1}{d_{2}^{0.975}}}$. Then  
\[
\ \ \ \ {\rm dist}(\ell_{1},\ell_{2})\geq{(1-\varepsilon)d_{1}}
\]
 $($in particular, ${\rm dist}(\ell_{1},\ell_{2})\geq{(1-\varepsilon)})$, where $d_{1}=\| A_{1} \|$, $d_{2}=\| A_{2} \|$, $\ell_{1}=\ell_{A_{1}}$, $\ell_{2}=\ell_{A_{2}}$ .
\end{theorem}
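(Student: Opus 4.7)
The plan is to mirror the argument of \cref{prepare for first result}, exploiting the astronomical growth $T_{k+1}/T_k=2^{10a_{k+1}}$ in place of the linear dependence $z_i=Kd_i+L$. First I apply the skew-line distance formula $\dist(\ell_1,\ell_2)=|(A_1-A_2)\cdot(v_1\times v_2)|/\|v_1\times v_2\|$ with $v_i=(y_i,-x_i,T_{k_i})$. This is exactly the computation behind Ismailescu--Laskawiec's formula (1), now carried out with general $z$-components $z_i=T_{k_i}$, and yields
\[
(\dist(\ell_1,\ell_2))^2=\frac{N}{D},
\]
where
\[
N=\bigl[(d_1 T_{k_2})(d_1-cd_2)+(d_2 T_{k_1})(d_2-cd_1)\bigr]^{2}
\]
and
\[
D=(d_1 T_{k_2})^2+(d_2 T_{k_1})^2-2c\,(d_1 T_{k_2})(d_2 T_{k_1})+d_1^2 d_2^2(1-c^2).
\]
Set $u=d_1 T_{k_2}$, $v=d_2 T_{k_1}$, $\alpha=d_1-cd_2$ and $\beta=d_2-cd_1$, so $N=(u\alpha+v\beta)^2$ and $D=u^2-2cuv+v^2+d_1^2 d_2^2(1-c^2)$. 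The goal is to show $N/D\geq (1-o(1))\alpha^2$ and that $\alpha^2\gg d_1^2$.

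The key qualitative point is that the hypothesis $|c|\geq d_2^{-0.975}$ rules out the dangerous range $c\approx d_1/d_2$, in which $\alpha$ would vanish and the two lines would come close. Because $k_2>k_1$ forces $a_{k_2}\geq 100 a_{k_1}$, one has
\[
d_2^{0.025}\geq 2^{0.025 a_{k_2}}\geq 2^{2.5 a_{k_1}},
\]
which exceeds $d_1\leq 2^{2a_{k_1}}$ by a factor tending to $\infty$ with $k_1$. Hence $|cd_2|\geq d_2^{0.025}\gg d_1$, which gives
\[
|\alpha|=|d_1-cd_2|\geq |c|d_2-d_1\geq \tfrac{1}{2}d_2^{0.025}\geq d_1\cdot 2^{0.4 a_{k_1}}
\]
for any fixed $k_0\geq 1$ and all $k_1>k_0$. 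In particular $\alpha^2\geq d_1^2\cdot 2^{0.8 a_{k_1}}$, leaving enormous slack above $d_1^2$ to absorb the asymptotic losses in the next step.

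The remaining step is to confirm that $N/D$ is indeed close to $\alpha^2$; both corrections are driven by the tiny ratios $T_{k_1}/T_{k_2}\leq 2^{-10 a_{k_2}}$ and $d_2/T_{k_2}\leq 2^{-8 a_{k_2}}$. For the numerator, the crude estimate $|\beta|\leq 2d_2$ combined with the lower bound on $|\alpha|$ gives
\[
\frac{v|\beta|}{u|\alpha|}\leq \frac{4 d_2^{1.975}}{d_1}\cdot\frac{T_{k_1}}{T_{k_2}}\leq 4\cdot 2^{-6.05 a_{k_2}-a_{k_1}},
\]
so $N\geq(1-o(1))u^2\alpha^2$ regardless of whether the two summands of $u\alpha+v\beta$ have opposite signs. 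For the denominator, each of $v^2$, $|2cuv|$ and $d_1^2 d_2^2(1-c^2)$ is bounded by $u^2$ times a negative power of $T_{k_2}$, so $D\leq(1+o(1))u^2$. Combining,
\[
(\dist(\ell_1,\ell_2))^2\geq(1-o(1))\alpha^2\geq (1-o(1))\,d_1^2\cdot 2^{0.8 a_{k_1}},
\]
which is $\geq(1-\varepsilon)^2 d_1^2$ whenever $k_2>m_0(\varepsilon)$ is chosen so that the $o(1)$ error is at most $\varepsilon$. The main obstacle is purely bookkeeping: one must verify that every $o(1)$ estimate above is uniform over the admissible ranges $d_1\in(2^{a_{k_1}},2^{2a_{k_1}}]$, $d_2\in(2^{a_{k_2}},2^{2a_{k_2}}]$ and $|c|\geq d_2^{-0.975}$, and that the threshold $m_0$ depends only on $\varepsilon$. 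This is exactly what the specific exponent $0.975$ is engineered to buy: it ensures that $|cd_2|$ beats $d_1$ by a polynomial-in-$d_2$ margin, so the error terms, which decay like powers of $T_{k_2}^{-1}$, are swamped uniformly for $k_2$ large.
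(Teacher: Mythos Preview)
Your proof is correct and follows essentially the same route as the paper: compute the skew-line distance formula explicitly, use the hypothesis $|c|\geq d_2^{-0.975}$ together with $d_2^{0.025}\gg d_1$ (which comes from $a_{k_2}\geq 100\,a_{k_1}$) to bound the dominant numerator factor $|d_1-cd_2|$ from below, and use $T_{k_2}/T_{k_1}\geq 2^{10a_{k_2}}$ to show that every competing term in both numerator and denominator is negligible against $(d_1T_{k_2})^2$. Your $(u,v,\alpha,\beta)$ packaging is cleaner than the paper's substitution $d_2=d_1+h$, and you in fact extract the stronger bound $|\alpha|\geq d_1\cdot 2^{0.4a_{k_1}}$ where the paper only secures $|\alpha|\geq d_1$, but the underlying estimates are identical.
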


\begin{proof}[Proof of \cref{distance between axises}]
Denote $T'_{1}=T_{k_{1}}$, $T'_{2}=T_{k_{2}}$ and let $v_{1},v_{2}$ be the direction vectors of $\ell_{1},\ell_{2}$, respectively.
We have   
\[
{\rm dist}(\ell_{1},\ell_{2})=\frac{\big|\overset{\longrightarrow}{A_1A_2}\cdot(v_1\times v_2)\big|}{\|v_1\times v_2\|}
\]
where $\cdot$ denotes the inner product..
We calculate:
\begin{align*}
\big|\overset{\longrightarrow}{A_1A_2}&\cdot(v_1\times v_2)\big|=
\left|\begin{bmatrix}
x_2-x_1 & y_2-y_1 & 0 \\
y_1 & -x_1 & T_1' \\
y_2 & -x_2 & T_2'
\end{bmatrix}\right|\\
&=\, -T'_1\left|\begin{bmatrix}
x_2-x_1 & y_2-y_1 \\
y_2 & -x_2
\end{bmatrix}\right|+T'_2\left|\begin{bmatrix}
x_2-x_1 & y_2-y_1 \\
y_1 & -x_1
\end{bmatrix}\right|\\
&= T'_1\big(x_2^2+y_2^2-(x_1x_2+y_1y_2)\big)+T'_2\big(d_1^2-(x_1x_2+y_1y_2)\big).
\end{align*}
Note  that $x_1x_2+y_1y_2=cd_1d_2$. Hence, by algebraic manipulation,
\begin{equation*}
\big|\overset{\longrightarrow}{A_1A_2}\cdot(v_1\times v_2)\big|=T'_1\big(d_2^2-cd_1d_2)+T'_2(d_1^2-cd_1d_2\big).
\end{equation*}
Next,
\begin{align*}
\|v_1&\times v_2\|^2 =\|v_1\|^2\|v_2\|^2-(v_{1} \cdot {v_{2}}) ^2
=(d_1^2+{T'_1}^2)(d_2^2+{T'_2}^2)-(x_1 x_2+y_1 y_2+T'_1 T'_2)^2\\
&=(d_1^2+{T'_1}^2)(d_2^2+{T'_2}^{2})-(c d_1 d_2+T'_1 T'_2)^2\\
&=(1-c^2)d_2^2(d_1^2+{T'_1}^2)+(d_1T'_2-c T'_1d_2)^2.
\end{align*}

Therefore,
\begin{equation*}
\text{dist}^2(\ell_{1},\ell_{2})=\frac{(d_1^2T'_2+d_2^2T'_1-(T'_1+T'_2)d_1d_2c)^2}{(1-c^2)d_2^2(d_1^2+{T'_1}^2)+(d_1T'_2-cT'_1d_2)^2},
\end{equation*}
so the inequality
\[
\text{dist}^2(\ell_{1},\ell_{2})\geq (1-\varepsilon)d_1^2 
\]
which leads the desired result, is equivalent to
\[
(2) \ \ \ \   \big(d_1^2T_2'+d_2^2T'_1-(T_1+T_2)d_1d_2c\big)^2\geq(1-\varepsilon)d_1^2(1-c^2)d_2^2(d_1^2+{T'_1}^2)+(d_1T'_2-cT'_1d_2)^2.
\]
Denoting $d_2=d_1+h$,  the left-hand side of (2) takes the form 
\begin{align*}
&\big(d_1^2T_2'+d_2^2T'_1-(T_1'+T'_2)d_1d_2c\big)^2
\\&=\big(d_1^2T'_2+(d_1^{2}+2hd_1+h^2)T'_1-(T_1'+T_2')d_1d_2c\big)^2\\
&=\big(T_1'((1-c)d_1^{2}+(2-c)hd_1+h^{2})+T_2'(d_1^{2}(1-c)-hd_1c))
\big)^2.
\end{align*}
We want to verify that
\begin{equation*}
|d_1^{2}(1-c)-hd_1c|\geq d_1^2.
\end{equation*}
it suffices to show that $|hd_1c| \geq{|3d_1^{2}|}$, which is equivalent to
\begin{equation*}
 |c|\geq \frac{3d_1}{h}.
\end{equation*}
For $d_1$ large enough (since $h>d_1^{50}$) its enough that $|c|\notin(0,\frac{1}{(d_1+h)^{0.975}})$, as in our assumptions.
Denote 
\[
I=\big|T_1'\big((1-c)d_1^{2}+(2-c)hd_1+h^{2}\big)\big|.
\]

If $I<T_2d_1^{2}$, then  
\begin{equation*}
\big(T_1'((1-c)d_1^{2}+(2-c)hd_1+h^{2})+T_2'(d_1^{2}(1-c)-hd_1c))
\big)^2 \geq(T_2'd_1^{2}-I)^2.
\end{equation*}

\begin{align*}
I\leq 2hd_1T'_1+h^2T'_1+T'_1d_1^2\leq &d_2^2T'_1+d_2^2T'_1+2d_2^2T'_1
\\
&=4d_2^2T'_1\leq 4\cdot 2^{4a_{k_2}}T'_1=2^{10a_{k_2}}T'_1\frac{4}{2^{6a_{k_2}}}
\end{align*}
\begin{equation*}
\frac{T'_2}{T'_1}\geq\frac{T_{a_{k_2}}}{T_{a_{k_2-1}}}=2^{10a_{k_2}}\Rightarrow2^{10a_{k_2}}T_1\leq T'_2
\end{equation*}
\begin{equation*}
\Rightarrow I\leq T'_2\cdot o(1)\Rightarrow I\leq T_2'd_1^2\cdot o(1)
\end{equation*}
when the o(1) term is as a function of $k_{2}$, as $k_{2}$ tend to $\infty$. Then (for $k_2$ large enough):
\begin{equation*}
\big(T_1'((1-c)d_1^{2}+(2-c)hd_1+h^{2})+T_2'(d_1^{2}(1-c)-hd_1c))
\big)^2\geq (T_2d_1^{2})^2(1+o(1))
\end{equation*}
and for $k_2$ large enough 
\begin{equation*}
\geq (1-\frac{\varepsilon}{2})(T'_2d_1^2)^2. 
\end{equation*}
and since $d_{2}T'_{1} = o(d_{1}T'_{2})$, on the right side of (2): 

\begin{align*}
&(1-\varepsilon)d_1^2((1-c^2)d_2^2(d_1^2+{T'_1}^2)+(d_1T'_2-cT'_1d_2)^2)\leq(1-\varepsilon)(d_1^2(T_2'd_1)^2\cdot o(1)+(d_1T'_2)^2\cdot(1+o(1)))\leq
\\&{(1-0.75\varepsilon)(T'_2d_1^2)^{2}(1+o(1))}
\end{align*}
when the o(1) term is as a function of $k_{2}$, as $k_{2}$ tend to $\infty$. And again for $k_2$ large enough the inequality holds, then
\begin{equation*}
\text{dist}(\ell_{1},\ell_{2})^2\geq(1-\varepsilon)d_1^2
\end{equation*}
\begin{equation*}
\Rightarrow \dist(\ell_{1},\ell_{2})\geq\sqrt{1-\varepsilon}d_1\geq(1-\varepsilon)d_1
\end{equation*}
\end{proof}

\begin{definition} If $\mathcal L$  is a lattice in $\mathbf{R}^{2}$, define 
\[
{\rm density}(\mathcal{L})=\lim_{r\to\infty}\frac{|B_{r}^{2}(0)\cap \mathcal{L}|}{\pi\cdot{r^{2}}}
\]
$($for lattices the limit always exists$)$.
\end{definition}

\begin{theorem}
\label{result 2}
Let $\mathscr{L}$ be a lattice in $\mathbb R^{2}$ with the property that  $d(x_{1},x_{2})\geq{1}$  for any two points $x_{1},x_{2}\in\mathscr{L}$ . Then, for every $\varepsilon>0$ there exists a non-parallel cylinder packing  $\mathcal C=\{C_{i}\}_{i=1}^{\infty}$  such that:
\begin{enumerate}
\item[{\rm 1.}] For any cylinder in $\mathcal C$, the intersection of its axis with the $(x,y)$-plane lies in $\mathscr{L}$.
\item[{\rm 2.}] The upper density of $\mathcal C$   is at least ${\rm density}(\mathscr{L})\cdot{\frac{\pi}{4}}\cdot{(1-\varepsilon)}$.
\end{enumerate}
\end{theorem}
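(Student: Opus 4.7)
The plan is to distribute lattice points across the annuli $\Omega_k$ and invoke the dual-circle-packing lemma to translate the density question into a planar one. Fix $\eta>0$ with $(1-\eta)^2\geq 1-\varepsilon$, and apply \cref{distance between axises} with $\eta$ in place of $\varepsilon$ to obtain thresholds $k_0$ and $m_0$ (enlarging them if necessary to absorb the estimates in Case~A below). For every $k>\max(k_0,m_0)$ and every $p=(x,y)\in \mathscr{L}\cap \Omega_k$, I take the line $\ell_p=\{(x,y,0)+t(y,-x,T_k):t\in\R\}$ and wrap around it the cylinder $C_p$ of radius $(1-\eta)/2$. These cylinders form the candidate packing $\mathcal{C}$.

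Non-parallelism of the axes is free from \cref{not parallel axises}. For disjointness I need $\dist(\ell_{p_1},\ell_{p_2})\geq 1-\eta$ for every pair of distinct centers, which I split into three cases. \emph{Case A (same annulus):} if $p_1,p_2\in\Omega_k$ the distance formula from the proof of \cref{distance between axises} collapses to
\[
\dist^2(\ell_{p_1},\ell_{p_2})=\frac{T_k^2|p_1-p_2|^4}{(1-c^2)d_1^2d_2^2+T_k^2|p_1-p_2|^2},
\]
and since $T_k\geq 2^{10a_k}$ dwarfs $d_1d_2\leq 2^{4a_k}$, the correction factor is $1-O(2^{-12a_k})$, giving $\dist\geq(1-\eta)|p_1-p_2|\geq 1-\eta$ (using the minimum lattice distance $\geq 1$). \emph{Case B (distinct annuli with $|c|\geq d_2^{-0.975}$):} \cref{distance between axises} applies directly and yields $\dist\geq(1-\eta)d_1\geq 1-\eta$. \emph{Case C (distinct annuli with $|c|< d_2^{-0.975}$):} I return to the unspecialized formula; at $c=0$, writing $x=d_1^2T'_2$, $y=d_2^2T'_1$ and assuming $d_1\leq d_2$,
\[
\dist^2=\frac{(x+y)^2}{x^2/d_1^2+y^2/d_2^2}\geq d_1^2
\]
by the bound $x^2/d_1^2+y^2/d_2^2\leq (x+y)^2/d_1^2$, and a perturbation by $|c|<d_2^{-0.975}$ leaves $\dist\geq (1-\eta)d_1\gg 1-\eta$ intact since $d_1\geq 2^{a_{k_1}}$.

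For the density, every axis $\ell_p$ is perpendicular to the line $Op$ (the vectors $(y,-x,T_k)$ and $(x,y,0)$ have zero dot product), so \cref{dual circle packing density} gives $\delta^{+}(\mathcal{C})=\delta^{+}(\widetilde{\mathcal{C}})$, where $\widetilde{\mathcal{C}}$ is the planar packing of disks of radius $(1-\eta)/2$ at $\mathscr{L}':=\bigcup_{k>\max(k_0,m_0)}(\mathscr{L}\cap\Omega_k)$. Along the subsequence $R_k=2^{2a_k}$ the disk $B^2_{R_k}(0)$ contains every $\Omega_j$ with $j\leq k$, and the ``gaps'' $\{2^{2a_j}<\|x\|\leq 2^{a_{j+1}}\}$ together with the central region $\{\|x\|\leq 2^{a_1}\}$ contribute a total area $O(2^{2a_k})=o(R_k^2)$ because of the double-exponential growth of the $a_k$. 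Hence $|\mathscr{L}'\cap B^2_{R_k}(0)|=\mathrm{density}(\mathscr{L})\cdot \pi R_k^2\cdot(1+o(1))$, so the planar packing density along this subsequence tends to $\frac{\pi(1-\eta)^2}{4}\,\mathrm{density}(\mathscr{L})\geq \mathrm{density}(\mathscr{L})\cdot\frac{\pi}{4}\cdot(1-\varepsilon)$. Passing to $\limsup$ gives the claimed bound on $\delta^+(\mathcal{C})$.

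The main obstacle is the small-$|c|$ disjointness subcase (Case~C), which is explicitly excluded from the hypotheses of \cref{distance between axises} and so needs an independent estimate. The key point is that the rapid growth $T'_2/T'_1\geq 2^{10a_{k_2}}$ makes every term in the distance formula other than $d_1^2T'_2$ in the numerator and $d_1T'_2$ in the denominator a lower-order perturbation; once this is written out carefully, the trivial inequality above extends from $c=0$ to $|c|<d_2^{-0.975}$ with only an $o(1)$ loss. Everything else is a routine adaptation of the machinery of Section~3.
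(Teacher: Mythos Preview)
Your Case~C is where the argument breaks, and the break is not a technicality: the ``perturbation'' you want to absorb is \emph{not} lower order. In the numerator
\[
N(c)=d_1^2T'_2+d_2^2T'_1-(T'_1+T'_2)d_1d_2c,
\]
the term $T'_2d_1d_2c$ has size $|c|\,d_2/d_1$ relative to $d_1^2T'_2$. With $|c|<d_2^{-0.975}$ this ratio is $d_2^{0.025}/d_1$, and since $a_{k_2}\geq 100a_{k_1}$ one always has $d_1\leq 2^{2a_{k_1}}<2^{0.025a_{k_2}}\leq d_2^{0.025}$; in other words this ``perturbation'' can dominate. Concretely, take $c\approx d_1/d_2$ (which lies in your Case~C window because $d_1<d_2^{0.025}$). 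Then $N(c)\approx T'_1d_2^2$, while the denominator of the distance formula is $\approx d_1^2T'_2{}^2$ (since $d_1T'_2\gg d_2T'_1$), giving
\[
\dist(\ell_1,\ell_2)\ \approx\ \frac{T'_1d_2^2}{d_1T'_2}\ \leq\ \frac{2^{4a_{k_2}}}{2^{a_{k_1}}\cdot 2^{10a_{k_2}}}\ =\ 2^{-6a_{k_2}-a_{k_1}}\ \ll\ 1.
\]
So for suitably placed lattice points with $\theta$ near $\pi/2$ the axes can come arbitrarily close (indeed they intersect at $c=d_1/d_2$ exactly), and the collection you build is \emph{not} a packing. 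Your ``key point'' that everything except $d_1^2T'_2$ and $d_1T'_2$ is negligible is simply false in this regime.

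This is precisely why the paper does not try to keep every lattice point. Its construction defines $\widetilde{\Omega}_k\subset\Omega_k\cap\mathscr{L}$ recursively by \emph{deleting} every $A_2\in\Omega_k\cap\mathscr{L}$ that fails the angle condition $|c|\geq d_2^{-0.975}$ with some earlier $A_1\in\bigcup_{i<k}\widetilde{\Omega}_i$; then only Cases~A and~B remain, and \cref{distance between axises outside a ball} (which combines \cref{distance between axises} with the Ismailescu--Laskawiec lemma for same-annulus pairs) gives $\dist\geq 1-\eta$. The missing idea in your write-up is the counting step showing these deletions are density-negligible: by \cref{LEM:UniformDistributionOfLattice}, each earlier point forbids $O(d_2^{-0.975}\cdot 2^{4a_k})$ points of $\Omega_k\cap\mathscr{L}$, and since $|\bigcup_{i<k}\widetilde{\Omega}_i|=O(2^{4a_{k-1}})$ the total removed is $O(2^{3.065a_k})=o(2^{4a_k})$. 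Your Case~A and density computation are essentially correct (Case~A is the Ismailescu--Laskawiec estimate in disguise), but without the deletion-and-counting argument Case~C cannot be closed.
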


The proof requires the following Lemma that show an intuitive  idea- for every lattice $\mathscr{L}$ and partition of circle of radius $r$ into cut sectors with same angle and area tends to $\infty$, the distribution of the sector of the points on the lattice which is in the circle, is converging to uniform(i.e, every sector has almost the same number of points on the lattice which is in the circle)
\begin{lemma}
Let $\mathscr{L}$ be a lattice in $\mathbb R^{2}$ and S is a convex bounded subset of $R^{2}$, such that the boundary of S is in the class Lip(n,M,L) (a definition of Lip(n,M,L) can be found in \cite{widmer2011}, Definition 2.2). Then $| |\mathscr{L}\cap{S}| - {\rm density}(\mathscr{L})\cdot{area(S)}| \leq{C(n, L, \mathscr{L})\cdot{M}}$  where $C(n, L, \mathscr{L})$ is a constant that depend only on the constants $n, L$ and the lattice $\mathscr{L}$.
\begin{proof}
    This follows directly from \cite{widmer2011}, Theorem 2.4
\end{proof}
\end{lemma}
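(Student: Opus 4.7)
The plan is simply to invoke Widmer's general lattice-point counting theorem and translate the resulting estimate into the form stated in the lemma. First I would recall the precise statement of Theorem 2.4 in \cite{widmer2011}: for any lattice $\Lambda\subset\mathbb{R}^n$ with successive minima $\lambda_1,\dots,\lambda_n$ (measured with respect to the Euclidean norm) and any bounded subset $S\subset\mathbb{R}^n$ whose boundary $\partial S$ lies in $\mathrm{Lip}(n,M,L)$, one has an estimate of the shape
$$\bigl||\Lambda\cap S|-\vol(S)/\det(\Lambda)\bigr|\ \leq\ c_{n}\cdot M\cdot \max_{0\leq i<n}\frac{L^{i}}{\lambda_{1}\cdots\lambda_{i}},$$
with $c_n$ depending only on the ambient dimension $n$ (and the $i=0$ term interpreted as $1$). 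This is exactly the kind of explicit quantitative error bound that Widmer proves.

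Next I would specialize this to the setting of the lemma, where the ambient dimension is $2$ (so $\vol=\ar$) and the lattice is $\mathscr{L}$. The bridge between Widmer's normalization and the one used in the paper is the identity
$$\text{density}(\mathscr{L})\ =\ \lim_{r\to\infty}\frac{|B_{r}^{2}(0)\cap\mathscr{L}|}{\pi r^{2}}\ =\ \frac{1}{\det(\mathscr{L})},$$
a standard fact about planar lattices (and in fact an immediate corollary of Widmer's theorem applied to the expanding family $\{B_{r}^{2}(0)\}$, whose boundaries lie uniformly in an appropriate $\mathrm{Lip}$ class with $M=O(r)$, so that the error term is negligible after division by $\pi r^{2}$). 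Hence the main term $\vol(S)/\det(\Lambda)$ in Widmer's bound coincides exactly with $\text{density}(\mathscr{L})\cdot\ar(S)$.

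Finally, the factor $c_{n}\cdot \max_{0\leq i<n} L^{i}/(\lambda_{1}\cdots\lambda_{i})$ depends only on $n$, on $L$, and on the (fixed) successive minima of $\mathscr{L}$. I would therefore define
$$C(n,L,\mathscr{L})\ :=\ c_{n}\cdot \max_{0\leq i<n}\frac{L^{i}}{\lambda_{1}(\mathscr{L})\cdots\lambda_{i}(\mathscr{L})},$$
and conclude $\bigl||\mathscr{L}\cap S|-\text{density}(\mathscr{L})\cdot\ar(S)\bigr|\leq C(n,L,\mathscr{L})\cdot M$, which is the desired inequality.

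The only real obstacle is bookkeeping: one must carefully match Widmer's scaling conventions for the $\mathrm{Lip}(n,M,L)$ class (i.e., confirm that $M$ genuinely enters linearly and that all other data — the parametrization constant $L$ and the shape of the lattice — can indeed be absorbed into a single $\mathscr{L}$- and $L$-dependent constant). No new geometric or probabilistic input is required beyond the careful citation of Widmer's theorem and the identification of lattice density with inverse covolume.
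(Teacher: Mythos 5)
Your argument is correct and is exactly the route the paper takes: the paper's proof is simply the citation of Widmer's Theorem 2.4, and your proposal just spells out the (routine) bookkeeping — identifying $\mathrm{density}(\mathscr{L})$ with $1/\det(\mathscr{L})$ and absorbing the successive-minima factor into $C(n,L,\mathscr{L})$ — that the citation leaves implicit.
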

\begin{lemma}
\label{LEM:UniformDistributionOfLattice}
Let $\mathscr{L}$ be a lattice in $\mathbb {R^{2}}$ that is spanned over $\mathbb {Z}$ by two linearly independent vectors. Let $r>0$ and let $\theta_{1}$ and $\theta_{2}$ be numbers such that 
$0\leq{\theta_{1}}<\theta_{2}\leq{2\pi}$
and  $\theta_{2}-\theta_{1}\geq{\frac{1}{2\sqrt{r}}}$. Then in $\mathscr{L}\cap{B^2_r(0)}$ there exist  
\[
{\rm density}(\mathscr{L})\cdot{\pi}\cdot{r^{2}}\cdot{\frac{\theta_{2}-\theta_{1}}{2\pi}}\cdot{(1+o(1))}
\]
points such that the angle $\alpha(x)$ between the line that passes through such a point and the origin and the positive $x$-axis lies in $[\theta_{1},\theta_{2}]$. The term $o(1)$ is bounded by a factor, g(r) such that g(r)=o(1) when r tends to infinity, and g(r) depends only on $r$ and $\mathscr  L$ and not on the specific choice of $\theta_{1},\theta_{2}$. 
\end{lemma}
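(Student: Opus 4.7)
The plan is to apply the previous lemma (Widmer's lattice-point counting bound) directly to the circular sector
\[
S = S(r,\theta_1,\theta_2) = \{x \in B^2_r(0) : \alpha(x) \in [\theta_1,\theta_2]\}.
\]
Its area is $\tfrac{1}{2} r^2 (\theta_2 - \theta_1) = \pi r^2 \cdot \tfrac{\theta_2-\theta_1}{2\pi}$, which is exactly the desired main term once multiplied by $\mathrm{density}(\mathscr{L})$. So everything reduces to controlling the boundary error $C(n,L,\mathscr L) \cdot M$ produced by the previous lemma, and comparing it to the main term.

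First, I would handle convexity. The sector $S$ is convex whenever $\theta_2 - \theta_1 \leq \pi$; if $\theta_2 - \theta_1 > \pi$ I would split $[\theta_1,\theta_2]$ into at most three consecutive sub-intervals of length $\leq \pi$, apply the argument to each convex sub-sector, and add. So without loss of generality assume $S$ is convex. Next I would show $\partial S$ lies in $\mathrm{Lip}(2,M,L)$ for appropriate constants depending only on $r$ and $\mathscr L$: the boundary consists of two radial segments of length $r$ and a circular arc of length $r(\theta_2-\theta_1) \leq 2\pi r$, so we can cover $\partial S$ by $O(r)$ parametrizations each of Lipschitz constant $L=1$. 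Thus $M = O(r)$ with $L$ and $n=2$ fixed, giving the absolute error bound
\[
\bigl| |\mathscr L \cap S| - \mathrm{density}(\mathscr L) \cdot \mathrm{area}(S) \bigr| \leq C'(\mathscr L) \cdot r,
\]
where $C'(\mathscr L)$ depends only on $\mathscr L$ and in particular does not depend on $\theta_1,\theta_2$.

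Now I would compare this error to the main term. Using the hypothesis $\theta_2-\theta_1 \geq \tfrac{1}{2\sqrt{r}}$,
\[
\mathrm{density}(\mathscr L) \cdot \mathrm{area}(S) = \mathrm{density}(\mathscr L) \cdot \tfrac{1}{2}r^2 (\theta_2-\theta_1) \geq \tfrac{\mathrm{density}(\mathscr L)}{4} \cdot r^{3/2}.
\]
Dividing the absolute error $O(r)$ by this lower bound gives a relative error of size $O(r^{-1/2})$, which tends to $0$ as $r \to \infty$ and depends only on $r$ and $\mathscr L$ (not on the choice of $\theta_1,\theta_2$). Therefore
\[
|\mathscr L \cap S| = \mathrm{density}(\mathscr L) \cdot \pi r^2 \cdot \tfrac{\theta_2-\theta_1}{2\pi} \cdot (1 + o(1)),
\]
with the $o(1)$ term uniform in $\theta_1,\theta_2$, as required.

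The only mild subtlety I anticipate is making sure the boundary constants in the Widmer bound depend only on $\mathscr L$ (and on fixed $n$ and $L$) and not on $r$ or the opening angle, so that the $C'(\mathscr L) \cdot r$ bound is legitimate and uniform. This is handled by fixing $L = 1$ once and for all and letting the growth with $r$ appear entirely through the number $M$ of Lipschitz pieces needed to cover $\partial S$, which is easy to bound directly from the geometry of a circular sector. Everything else is routine.
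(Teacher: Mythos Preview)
Your proposal is correct and follows essentially the same approach as the paper: show the sector lies in $\mathrm{Lip}(2,O(r),1)$, apply the Widmer-type counting lemma to get an absolute error $O_{\mathscr L}(r)$, and divide by the main term (using the hypothesis $\theta_2-\theta_1\ge \tfrac{1}{2\sqrt r}$) to obtain a relative error $O_{\mathscr L}(r^{-1/2})$ uniform in $\theta_1,\theta_2$. The paper's proof is terser and does not explicitly address the non-convex case $\theta_2-\theta_1>\pi$, so your splitting into at most three convex sub-sectors is a small but legitimate addition; otherwise the arguments coincide.
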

\begin{proof}
It's easy to check that every section of circle with radius r (WLOG $r\in{\mathcal{N}}$) is in the class Lip(2,10r, 1). Then, by the previous lemma, in every section C with angle $\alpha$ of circle with radius r, there are
\[
{\rm density}(\mathscr{L})\cdot{\pi}\cdot{r^{2}}\cdot{\alpha}+ q
\] 
points in $\mathscr{L}\cap{C}$, where $|q|$ bounded by $C(\mathscr{L})\cdot{10r}$. which finishes the proof.
\end{proof}
\begin{theorem}
\label{distance between axises outside a ball}
For every $\varepsilon>0$ there exist an $M>0$ such that the following holds: for any $k_{1},k_{2}>M$ and any two points $x_{1}\in \Omega_{k_{1}}$ and $x_{2}\in \Omega_{k_{2}}$ such that $k_{1}=k_{2}$ and $\|x_{1}-x_{2} \|\geq{1}$ or the pair $x_{1},x_{2}$  satisfies condition (1) of \cref{distance between axises}, 
\[
{\rm dist}(\ell_{x_{1}},\ell_{x_{2}})\geq{1-\varepsilon}.
\]
\end{theorem}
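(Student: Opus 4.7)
The plan is to split the proof into two cases matching the two alternative hypotheses on the pair $(x_1,x_2)$: either $k_1 \neq k_2$ (in which case we inherit condition~(1) of \cref{distance between axises}), or $k_1 = k_2$ (in which case we inherit $\|x_1-x_2\|\geq 1$). The first case will be essentially immediate from \cref{distance between axises}; the real content is the second case, which requires a small direct computation that is cleaner than, and not covered by, \cref{distance between axises}.

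For the first case, assume without loss of generality (after a relabeling that swaps the roles of $x_1$ and $x_2$) that $k_2 > k_1$. Choose $M \geq \max(k_0,m_0)$ where $k_0,m_0$ are the constants produced by \cref{distance between axises} for the given $\varepsilon$. Since $k_1,k_2 > M$, condition~(1) holds by hypothesis, and \cref{distance between axises} gives
\[
\dist(\ell_{x_1},\ell_{x_2}) \geq (1-\varepsilon)\,d_1 \geq (1-\varepsilon)\cdot 2^{a_{k_1}} \geq 1-\varepsilon,
\]
since $x_1\in\Omega_{k_1}$ forces $d_1>2^{a_{k_1}}\geq 1$.

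For the second case, write $T:=T_{k_1}=T_{k_2}$ and use the distance formula derived inside the proof of \cref{distance between axises}. Setting $T_1'=T_2'=T$ and using the identity $d_1^2+d_2^2-2cd_1d_2 = \|x_1-x_2\|^2$, both the numerator and the $T^2$-part of the denominator collapse, yielding
\[
\dist^2(\ell_{x_1},\ell_{x_2}) \;=\; \frac{T^{2}\|x_1-x_2\|^{4}}{(1-c^{2})d_1^{2}d_2^{2} + T^{2}\|x_1-x_2\|^{2}} \;=\; \frac{\|x_1-x_2\|^{2}}{1+\dfrac{(1-c^{2})d_1^{2}d_2^{2}}{T^{2}\|x_1-x_2\|^{2}}}.
\]
Using $\|x_1-x_2\|\geq 1$, $d_1,d_2\leq 2^{2a_{k_1}}$, and $T=T_{k_1}\geq 2^{10a_{k_1}}$, the error term in the denominator is bounded by $d_1^{2}d_2^{2}/T^{2}\leq 2^{8a_{k_1}}/2^{20a_{k_1}} = 2^{-12a_{k_1}}$, which tends to $0$ as $k_1\to\infty$. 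Choosing $M$ large enough that $2^{-12a_{M}}\leq 2\varepsilon-\varepsilon^{2}$ then gives $\dist^2\geq \|x_1-x_2\|^{2}(1-(2\varepsilon-\varepsilon^{2}))\geq (1-\varepsilon)^{2}$, as desired.

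The main obstacle is really just organizational rather than technical: one must verify the algebraic identity that makes the distance formula collapse when $T_1'=T_2'$, and then check that the super-exponential growth of $T_k$ relative to the at-most-polynomial-in-$2^{a_k}$ size of $d_1,d_2$ is enough to absorb the correction. Both points are straightforward given the recursive definitions of $a_k$ and $T_k$. Finally one sets $M$ to be the maximum of the threshold from the first case and the threshold from the second case.
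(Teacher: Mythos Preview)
Your proof is correct. The case $k_1\neq k_2$ is handled exactly as in the paper, by invoking \cref{distance between axises}. The case $k_1=k_2$ is where you diverge: the paper quotes the Ismailescu--Laskawiec lemma (\cref{Ismailescu lemma}) as a black box, checking that $8r^2T_k\geq R^4$ with $r=\tfrac12$, $R=2^{2a_k}$, and concluding $\dist(\ell_1,\ell_2)\geq 1-\tfrac{1}{T_k}$. You instead specialize the explicit distance formula from the proof of \cref{distance between axises} to $T_1'=T_2'=T$, collapse it via $d_1^2+d_2^2-2cd_1d_2=\|x_1-x_2\|^2$, and bound the correction term $(1-c^2)d_1^2d_2^2/(T^2\|x_1-x_2\|^2)$ directly. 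Your algebra is right (the denominator simplification $(1-c^2)d_2^2+(d_1-cd_2)^2=\|x_1-x_2\|^2$ is the key identity), and the bound $T_{k}\geq 2^{10a_k}$ for $k\geq 2$ suffices. In effect you have re-proved the special case of \cref{Ismailescu lemma} that is actually needed here; this makes the argument self-contained at the cost of repeating a short computation, whereas the paper's version is terser but imports an external lemma.
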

\begin{lemma}
\label{Ismailescu lemma}
{\rm (Ismailescu and Laskawiec, \cite{ismailescu2019dense})}. Let $r,R>0$, $A_{1}=(x_{1},y_{1},0)$, $A_{2}=(x_{2},y_{2},0)$. Assume  $\| A_{1}A_{2} \|\geq{2r}$, $\| A_{1}\|\leq{R}$ and $\| A_{2}\|\leq{R}$.   Let $\ell_{i}$ denote the straight line that passes through $A_{i}$ and has the direction vector $v_{i}=(y_{i},-x_{i},T)$ where $8r^{2}T\geq{R^{4}}$, $i=1,2$. Then 
\[
{\rm dist}(\ell_{1},\ell_{2})\geq 2r\Big(1-\frac{1}{T}\Big).
\]
\end{lemma}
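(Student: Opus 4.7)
The plan is to apply the standard formula for the distance between two skew lines in $\mathbb R^3$,
$$\dist(\ell_1, \ell_2) = \frac{\big|\overrightarrow{A_1 A_2} \cdot (v_1 \times v_2)\big|}{\|v_1 \times v_2\|},$$
and then to do explicit (and short) algebra exploiting the very special form $A_i = (x_i, y_i, 0)$, $v_i = (y_i, -x_i, T)$. The key point I would aim to extract is an identity for the numerator that depends only on $T$ and on $\|A_1 A_2\|$, together with an upper bound on the denominator that differs from the same quantity by a term controlled by the hypothesis $8 r^2 T \geq R^4$.

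First I would compute $v_1 \times v_2$ by direct expansion, obtaining
$$v_1 \times v_2 = \big(T(x_2 - x_1),\; T(y_2 - y_1),\; x_1 y_2 - x_2 y_1\big).$$
The vector $\overrightarrow{A_1 A_2} = (x_2 - x_1, y_2 - y_1, 0)$ has vanishing third coordinate, so the third component of the cross product drops out of the triple product and the first two components combine into a perfect square:
$$\overrightarrow{A_1 A_2} \cdot (v_1 \times v_2) = T \cdot \|A_1 A_2\|^2.$$
This is the crucial algebraic cancellation. Using $\|A_1 A_2\| \geq 2r$ gives the lower bound $T \cdot \|A_1 A_2\|^2 \geq 2r T \cdot \|A_1 A_2\|$ for the numerator once we divide by $\|v_1 \times v_2\|$.

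For the denominator, the explicit formula for $v_1 \times v_2$ yields
$$\|v_1 \times v_2\|^2 = T^2 \|A_1 A_2\|^2 + (x_1 y_2 - x_2 y_1)^2.$$
The planar cross-term is bounded by Cauchy--Schwarz: $(x_1 y_2 - x_2 y_1)^2 \leq \|A_1\|^2 \|A_2\|^2 \leq R^4$. Invoking the hypothesis $R^4 \leq 8 r^2 T$ and then absorbing $8 r^2 \leq 2 \|A_1 A_2\|^2$ (since $\|A_1 A_2\|\geq 2r$) gives
$$\|v_1 \times v_2\|^2 \leq T^2 \|A_1 A_2\|^2 + 2 T \|A_1 A_2\|^2 = \|A_1 A_2\|^2 \, (T^2 + 2T).$$
Combining the two estimates,
$$\dist(\ell_1, \ell_2) \geq \frac{T \|A_1 A_2\|^2}{\|A_1 A_2\|\,\sqrt{T^2 + 2T}} \geq \frac{2 r T}{\sqrt{T^2 + 2T}}.$$

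The final step is the elementary inequality $\tfrac{T}{\sqrt{T^2 + 2T}} \geq 1 - \tfrac{1}{T}$, which after squaring and clearing denominators reduces to $(T-1)^2 (T+2) \leq T^3$, i.e.\ $-3T + 2 \leq 0$, and is automatic for $T \geq 1$. (If $T < 1$ then $1 - 1/T \leq 0$ and the bound is vacuous.) I do not anticipate any real obstacle: the whole argument is essentially the distance-between-skew-lines formula plus one application each of Cauchy--Schwarz, the triangle-type hypothesis $\|A_1 A_2\| \geq 2r$, and the linking hypothesis $8 r^2 T \geq R^4$, each used exactly once; the only minor care needed is to channel the slack in the denominator into precisely the factor $1 - 1/T$ asserted in the lemma.
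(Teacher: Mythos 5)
Your proof is correct: the triple-product identity $\overrightarrow{A_1A_2}\cdot(v_1\times v_2)=T\,\|A_1A_2\|^2$, the bound $(x_1y_2-x_2y_1)^2\le R^4\le 8r^2T\le 2T\|A_1A_2\|^2$, and the elementary inequality $T/\sqrt{T^2+2T}\ge 1-\tfrac1T$ combine exactly as you say to give the stated estimate, with the only implicit point being that $v_1\times v_2\neq 0$ (so the skew-line distance formula applies), which holds since $T>0$ and $A_1\neq A_2$. Note that the paper itself gives no proof of this lemma, deferring entirely to Ismailescu and Laskawiec's Lemma 2.1, so your argument is a self-contained verification along the same standard lines as the cited source.
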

A proof of this lemma  can be found in \cite{ismailescu2019dense}, Lemma 2.1. 

\begin{proof}[Proof of \cref{distance between axises outside a ball}] By \cref{distance between axises}, there exist $k_{0}$ such that for every $k_{1}, k_{2} > k_{0}$, $k_{1}\neq{k_{2}}$, and every $x_{1}\in \Omega  _{k_{1}}$, $x_{2}\in{\Omega_{k_{2}}}$ which satisfies (1), the assertion is true.  Therefore, we need to find $M'=M'(\varepsilon)$ such that the assertion holds true for every $k>M'$ and $x_{1}\neq{x_{2}}\in \Omega_{k}$. 
\smallskip

Now, for every $k\in\mathbb{N}$ and every point $x\in \Omega_{k}$, $\| x \|\leq{2^{2a_{k}}}$, and for every point $y\in \Omega_{k}$, $y\neq x$, with $\|x-y \|\geq{1}=2\cdot{\frac{1}{2}}$, choose $r=\frac{1}{2}$ and $R=2^{2a_{k}}$. Then $8r^{2}T_{k}=2T_{k}\geq{2^{100a_{k}}}\geq{2^{8a_{k}}}=R^{4}$. Hence, by \cref{Ismailescu lemma},  ${\rm dist}(\ell_{1},\ell_{2})\geq{2\cdot{\frac{1}{2}}\cdot{(1-\frac{1}{T_{k}})}}=1-\frac{1}{T_{k}}$ and since $T_{k}\to\infty$ as $k\to\infty$, there exists $M'=M'(\varepsilon)$, such that for every $k>M'$, $1-\frac{1}{T_{k}}\geq{1-\varepsilon}$. Then, this implies that ${\rm dist}(\ell_{1},\ell_{2})\geq{1-\varepsilon}$.
\end{proof}

\begin{proof}[Proof of \cref{result 2}]  Construction: Let $\varepsilon>0$, and let $M=M(\varepsilon)$ be the constant whose existence is asserted in \cref{distance between axises outside a ball}. For every $k\geq{M}$ define the set $\widetilde{\Omega_{k}}$  recursively by
\begin{enumerate}
\item $\widetilde{\Omega_{ M }}= \Omega_{M} \cap \mathscr{L}$.
\item For every $k>M$, $\widetilde{\Omega_{k}}$ is the set of all points in $\Omega_{k}\cap \mathscr{L}$ that satisfy (1) with all the points in $\bigcup_{i=M}^{k-1} \widetilde{\Omega_{ i }}$.
\end{enumerate}

 By \cref{LEM:UniformDistributionOfLattice} and the Taylor expansion of the cosine function around $\frac{\pi}{2}$, for every $x\in\bigcup_{i=M}^{k-1} \widetilde{\Omega_{i}}$ there exist at most $b\cdot{\frac{1}{2^{\cdot{0.975}{a_{k}}}}}\cdot{(2^{2a_{k}})^{2}}$ points in $\Omega_{k} \cap \mathscr{L}$ such that the absolute value of the cosine of the angle between the line that passes through the point and the origin and the line that passes through $x$ and the origin lies in the range  $I=[0,\frac{1}{2^{0.975a_{k}}})$ where b is a constant (clearly that all the others satisfy (1) with x). Since  $\bigcup _{i=M}^{k-1} \widetilde{\Omega_{i}}\subseteq{B_{2^{2a_{k-1}}}(0)}$, there exist M', such that for every $k>M'$ there exist at most 
 \begin{align*}
     |\bigcup_{i=M}^{k-1} \widetilde{\Omega_{ i }}|&\leq{|B_{2^{2a_{k-1}}}(0)\cap \mathscr{L}|}\\
     &\leq{({\rm density}(\mathscr{L})+0.01)\cdot{\pi}\cdot{(2^{2a_{k-1}})^2}}\leq{\left(\frac{4}{\sqrt{12}}+0.01\right)\cdot{\pi}\cdot{2^{4a_{k-1}}}}\\
     & \leq{10\cdot{2^{4a_{k-1}}}}
 \end{align*}
 points in $\bigcup_{i=M}^{k-1} \widetilde{\Omega_{ i }}$. ($\frac{4}{\sqrt{12}}$ is the maximal density of a lattice in the plane which satisfy that the distance between every 2 points on the lattice is at least 1, see \cite{chang2010simple}).
 
Then for every $k>\max{\{M,M'\}}$ there are at most
\[
b\cdot\frac{1}{2^{0.975a_{k}}}\cdot{(2^{2a_{k}})^{2}}\cdot{10}\cdot{2^{4a_{k-1}}}=10b\cdot{2^{3.065a_{k}}}
\]
points in $\Omega_{k} \cap \mathscr{L}$  which don't satisfy (1) with some point in $\bigcup_{i=M}^{k-1} \widetilde{\Omega_{i}}$. We define $\widetilde{\Omega_{k}}$ to be the set of all points in $\Omega_{k} \cap \mathscr{L}$ without these points. Now for every point in $\widetilde{\Omega_{k}}$ we will construct a cylinder with axis $(x,y,0)+t(y,-x,T_{k})$ and radius $\frac{1}{2}\cdot{(1-\varepsilon)}$. By \cref{distance between axises}, it is a cylinder packing, and by \cref{not parallel axises} it is non-parallel cylinder packing. denote the cylinder packing by $C$. Since
\[
\lim_{k\to\infty}\frac{10b\cdot{2^{3.065a_{k}}}}{{\rm Area}\big(B_{2^{2a_{k}}}^{2}(0)\big)}=0,
\]
the points we remove will not affect  the density of the dual circle packing $\widetilde{\mathcal C}$. Then, by \cref{dual circle packing density}, it is readily concluded that 

$\delta^{+}(\mathcal C)\geq{\frac{\pi}{4}\cdot{(1-\varepsilon)^{2}}\cdot{density(\mathscr{L})}}$, which finishes the proof.
\end{proof}

\begin{theorem}
 For every lattice $\mathcal{L}$, there exist a non-parallel cylinder packing $\mathcal C=\{C_{i}\}_{i=1}^{\infty}$  such that:
\begin{enumerate}
\item[{\rm 1.}] For any cylinder in $\mathcal C$, the intersection of its axis with the $(x,y)$-plane lies in $\mathcal{L}$.
\item[{\rm 2.}] The upper density of $\mathcal C$   is at least ${\rm density}(\mathcal{L})\cdot{\frac{\pi}{4}}$.
\end{enumerate}
\end{theorem}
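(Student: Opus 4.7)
The plan is to derive this theorem from \cref{result 2} by a limiting argument that removes the $(1-\varepsilon)$ factor. I first reduce, by rescaling $\mathcal{L}$ by a scalar $s>0$, to the case where the minimum pairwise distance in $\mathcal{L}$ is at least $1$: both ${\rm density}(\mathcal{L})$ and cylinder-packing density transform consistently under scaling, so the inequality to be proved is scale-invariant. Then I apply \cref{result 2} with a sequence $\varepsilon_n \downarrow 0$ to obtain packings $\mathcal{C}_n$ on $\mathcal{L}$, each with cylinders of radius $(1-\varepsilon_n)/2$ and upper density at least ${\rm density}(\mathcal{L}) \cdot \pi/4 \cdot (1-\varepsilon_n)$.

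The key observation is that the axis direction assigned to a lattice point $(x,y) \in \Omega_k$ by the construction of \cref{result 2}, namely $(y,-x,T_k)$, depends only on the shell structure $(a_k, T_k)$ and not on $\varepsilon_n$. Only the sieving set $\widetilde{\Omega_k}$ and the cylinder radius depend on $\varepsilon_n$, and the calculation in the proof of \cref{result 2} shows that at most $10b \cdot 2^{3.065\,a_k}$ lattice points are excluded from $\widetilde{\Omega_k}$, which is of strictly lower order than $|\Omega_k \cap \mathcal{L}| = \Theta(2^{4a_k})$. Hence as $\varepsilon_n \to 0$ the sieved sets stabilise to capture asymptotically all of $\Omega_k \cap \mathcal{L}$ in every shell.

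Based on this I construct a single packing $\mathcal{C}$ with cylinders of radius exactly $1/2$, by choosing a single axis set $\mathcal{A} \subseteq \mathcal{L} \cap \bigcup_k \Omega_k$ satisfying two properties: every pair of axes in $\mathcal{A}$ has distance at least $1$, and $\mathcal{A}$ has the same asymptotic shell-density as $\Omega_k \cap \mathcal{L}$. For pairs coming from different shells, \cref{distance between axises} gives distance at least $(1-\varepsilon)\,d_1$, which automatically exceeds $1$ once $d_1 \geq (1-\varepsilon)^{-1}$; thus for $k$ large the cross-shell constraint is automatic. For same-shell pairs, \cref{Ismailescu lemma} gives distance at least $2r(1-1/T_k)$, which for $r=1/2$ falls short of $1$ by $1/T_k$; I offset this by thinning the lattice inside each shell by a factor $1 + 1/(T_k-1) = 1 + o(1)$, which removes only a set of asymptotic density zero. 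Non-parallelism of the resulting axes follows from \cref{not parallel axises}.

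Finally, I compute the upper density using \cref{dual circle packing density}. The dual circle packing consists of radius-$1/2$ circles at the centres in $\mathcal{A}$, which has asymptotic density ${\rm density}(\mathcal{L})$, so a shell-limsup calculation in the style of \cref{calc upper density}, evaluated along the subsequence $r_k = 2^{2a_k}$, gives upper density at least ${\rm density}(\mathcal{L}) \cdot \pi/4$. The main obstacle is the rigorous verification that the thinning needed to convert the $\geq (1-\varepsilon)d_1$ and $\geq 2r(1-1/T_k)$ bounds into strict $\geq 1$ separation can be performed while removing only an $o(1)$ fraction of each shell; this will require combining the shell-size estimates in the proof of \cref{result 2} with the very fast growth of $T_k$ relative to $2^{2a_k}$, so that the amount of thinning per shell is $O(2^{-a_k})$, negligible for the limsup calculation.
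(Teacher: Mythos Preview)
Your cross-shell argument is fine: once $d_1\ge 2$ and you fix any $\varepsilon\le 1/2$, the bound $(1-\varepsilon)d_1\ge 1$ from \cref{distance between axises} is automatic, so different-shell pairs cause no trouble once $k_1$ is large. The gap is in the same-shell step. You propose to ``thin the lattice inside each shell by a factor $1+1/(T_k-1)$'', removing only an $o(1)$ fraction, in order to upgrade the Ismailescu bound $1-1/T_k$ to $\ge 1$. But deleting points cannot raise the minimum pairwise distance from $1$ to anything strictly larger while removing only an $o(1)$ fraction: in the hexagonal lattice normalised to minimum distance $1$ --- the case that matters for the corollary --- every lattice point has six nearest neighbours at distance exactly $1$, so any subset with all pairwise distances $>1$ is an independent set in a $6$-regular graph containing triangles and must omit a fixed positive proportion of the points. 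The number $1+1/(T_k-1)$ is a \emph{scaling} factor, not a deletion ratio, and dilating a subset of $\mathcal L$ by a non-integer scalar does not keep it inside $\mathcal L$.

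The paper resolves the same-shell shortfall precisely by such a dilation rather than by thinning. It partitions the shell indices into blocks $m_n\le k<m_{n+1}$ with $\varepsilon_n=1/n$, and for $x\in\widetilde\Omega_k$ in block $n$ replaces the axis $\ell_x$ by the dilate $\ell_{x'}=\tfrac{1}{1-\varepsilon_n}\,\ell_x$ through $x'=\tfrac{1}{1-\varepsilon_n}\,x$. This turns the bounds $\ge 1-\varepsilon_n$ (same shell) and $\ge(1-\varepsilon_n)d_1$ (different shells, same block) into $\ge 1$; for pairs in different blocks a short triangle-inequality estimate absorbs the mismatch of dilation factors, using that $d_1\ge 2$. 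Since the dilation factors tend to $1$, the dual circle packing retains asymptotic density ${\rm density}(\mathcal L)$, and \cref{dual circle packing density} then gives the claimed upper density. Note that the dilated base points $x'$ no longer lie in $\mathcal L$, so the paper's own construction does not literally satisfy property~1 either; this is immaterial for the density conclusion and for the corollary, and your opening scale-invariance reduction already concedes the same point.
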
 
\begin{proof}
Let $\varepsilon_{n}=\frac{1}{n}$, for every $n\geq{10}$. Let $k_{0}$, $t_{n} = M'(\varepsilon_{n})$ be the constants whose existence is asserted in \cref{distance between axises} (WLOG, $t_{n}$ is monotonic increasing and $t_{10}\geq{k_{0}}$). Let $p_{n}$ be the constant whose existence is asserted in \cref{distance between axises outside a ball}. Define $m_{n} = max(\{p_{n}, t_{n}\})$.
Now, for every $k\geq{m_{10}}$ define the set $\widetilde{\Omega_{k}}$  recursively by
\begin{enumerate}
\item $\widetilde{\Omega_{m_{10}}}= \Omega_{m_{10}} \cap \mathscr{L}$.
\item For every $k>m_{10}$, $\widetilde{\Omega_{k}}$ is the set of all points in $\Omega_{k}\cap \mathscr{L}$ that satisfy (1) with all the points in $\bigcup_{i=m_{10}}^{k-1} \widetilde{\Omega_{ i }}$.
\end{enumerate}

By the same calculation of \cref{result 2}, the points we remove don't affect about the density of the set of the points on the $(x,y)$-plane that we choose.
For every $k\geq{m_{10}}$, $(x_{1},y_{1},0)\in{\widetilde{\Omega_{ k }}}$, we take the axis $\ell_{x}=\{(x_{1},y_{1},0)+t(y_{1},-x_{1},T_{k}): t \in \mathbb{R}\}$.
Now, for every $n\geq{10}$, and for every $k_{2}\geq{k_{1}}\geq{m_{10}}$ such that $k_{2}\geq{m_{n}}$, for every $x_{1}\in{\widetilde{\Omega_{ k_{1} }}}$, $x_{2}\in{\widetilde{\Omega_{ k_{2} }}}$ define $\ell_{1} = \ell_{x1}$, $\ell_{2} = \ell_{x2}$
\begin{enumerate}
\item If $k_{1}=k_{2}$, then ${\rm dist}(\ell_{1},\ell_{2})\geq{1-\varepsilon_{n}}$
\item If $k_{2}\neq{k_{1}}$  (without loss of generality we can assume that $k_{2}>k_{1}$) and if $x_{1},x_{2}$ satisfy (1), then ${\rm dist}(\ell_{1},\ell_{2})\geq{(1-\varepsilon_{n})d_{1}}$, when $d_{1}=\| x_{1} \|$.
\end{enumerate}
Now, take $m_{10},m_{11}$ which are suitable for $\varepsilon_{10}$, $\varepsilon_{11}$ respectively (without loss of generality, we can assume that $m_{11}>m_{10}$). For every $m_{10}\leq{k}<m_{11}$, and for every $x\in \Omega_{k}$, take $x'=\frac{1}{1-\varepsilon_{10}}\cdot{x}$,  and let $\ell_{x'}$ be the line that passes through $x'$ and has the same direction vector as $\ell_{x}$. Note that $\ell_{x'}=\frac{1}{1-\varepsilon_{10}}\cdot{\ell_{x}}$. Then, take $m_{11},m_{12}$ which are suitable for $\varepsilon_{11}$, $\varepsilon_{12}$ respectively (without loss of generality, we can assume that $m_{12}>m_{11}$). For every $m_{11}\leq{k}<m_{12}$, and for every $x\in \Omega_{k}$ take $x'=\frac{1}{1-\varepsilon_{11}}\cdot{x}$, let $\ell_{x'}$ be the line that passes through $x'$ and has the same direction vector as $\ell_{x}$. Again, note that that $\ell_{x'}=\frac{1}{1-\varepsilon_{11}}\cdot{\ell_{x}}$. Then take $m_{12},m_{13}$ which are suitable for $\varepsilon_{12}$, $\varepsilon_{13}$, and so on. 
Again, we don't affect the density of the set of the points that we choose.

Now consider the collection $\mathcal C$ of all the cylinders of radius $\frac{1}{2}$ and with axes of the form $\ell_{x'}$. We claim that this collection is  a cylinder packing

Indeed, 
\begin{enumerate}
\item If $x_{1},x_{2}\in{\Omega_{k}}$ for some $k\in\mathbb{N}$, then,  by \cref{distance between axises outside a ball}, ${\rm dist}(\ell_{x'_1},\ell_{x'_2})\geq{1}$.
\item If $x_1\in \Omega_{k_1}$ and $x_2\in \Omega_{k_2} $ with $k_1\neq k_2$, then
\[ 
\begin{aligned}
&
{\rm dist}(\ell_{x'_1},\ell_{x'_2})={\rm dist}(\frac{1}{1-\varepsilon_{k_1}}\ell_{x_1},\frac{1}{1-\varepsilon_{k_2}}\ell_{x_2})
\\
&
={\rm dist}
(\frac{1}{1-\varepsilon_{k_2}}\cdot \frac{1-\varepsilon_{k_2}}{1-\varepsilon_{k_1}}\ell_{x_1},\frac{1}{1-\varepsilon_{k_2}}\ell_{x_2})
=
\frac{1}{1-\varepsilon_{k_2}}{\rm dist}(\frac{1-\varepsilon_{k_2}}{1-\varepsilon_{k_1}}\ell_{x_1},\ell_{x_2})
\\
&
\geq{\rm dist}(\ell_{x_1},\ell_{x_2})-\left|\frac{1-\varepsilon_{k_2}}{1-\varepsilon_{k_1}}-1\right|d_1 \geq  0.9d_1-0.2d_1 =0.7d_1\geq 1.
\end{aligned}
\]
\end{enumerate}
So  the collection ${\mathcal C}$ is a cylinder packing, and by \cref{not parallel axises} it is non-parallel cylinder packing. By \cref{dual circle packing density} it's upper density is at least $\frac{\pi}{4}\cdot{\rm density}(\mathscr{L}) $.  
\end{proof}

\begin{corollary}
 There exist a non-parallel cylinder packing $\mathcal C=\{C_{i}\}_{i=1}^{\infty}$ with $\delta^{+}({\mathcal C})={\frac{\pi}{\sqrt{12}}}$.
\end{corollary}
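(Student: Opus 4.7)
The plan is to apply the preceding theorem to the densest lattice packing of unit-diameter circles in the plane, and then invoke the classical upper bound of Bezdek--Kuperberg to conclude equality. The argument should fit in a few lines.

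First, I would take $\mathscr{L}$ to be the hexagonal lattice scaled so that the minimum distance between any two distinct lattice points is exactly $1$. This lattice has fundamental domain of area $\sqrt{3}/2$, hence
\[
{\rm density}(\mathscr{L})=\frac{2}{\sqrt{3}}.
\]
In particular, any two distinct points of $\mathscr{L}$ are at distance at least $1$, so $\mathscr{L}$ is admissible as input to the previous theorem.

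Next, apply the previous theorem with this $\mathscr{L}$: for every $\varepsilon>0$ it produces a non-parallel cylinder packing $\mathcal{C}_{\varepsilon}$ whose axes meet the $(x,y)$-plane in points of $\mathscr{L}$, with
\[
\delta^{+}(\mathcal{C}_{\varepsilon})\ \geq\ {\rm density}(\mathscr{L})\cdot\frac{\pi}{4}\ =\ \frac{2}{\sqrt{3}}\cdot\frac{\pi}{4}\ =\ \frac{\pi}{\sqrt{12}}.
\]
Actually, since the previous theorem already gives a packing whose upper density is at least $\pi/\sqrt{12}$ for any fixed small $\varepsilon$, I only need to pick one such $\varepsilon$ (say $\varepsilon = 1/2$, or in fact just invoke the cleaner stronger version immediately preceding the corollary, which yields the bound $\pi/\sqrt{12}$ without an $\varepsilon$-loss) and call the resulting packing $\mathcal{C}$.

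Finally, I would combine this with the classical upper bound of Bezdek and Kuperberg \cite{bezdek1990maximum} cited in the introduction, which asserts that every cylinder packing (in particular $\mathcal{C}$) satisfies $\delta^{+}(\mathcal{C})\leq \pi/\sqrt{12}$. Putting the two inequalities together yields $\delta^{+}(\mathcal{C})=\pi/\sqrt{12}$, which is the desired conclusion. There is no real obstacle here: the whole work was done in the previous theorem (construction) and in Bezdek--Kuperberg (matching upper bound); the only choice being made in this corollary is the choice of the hexagonal lattice, which is precisely the lattice that realizes the Lagrange/Fejes Tóth optimal circle-packing density $\pi/\sqrt{12}$ mentioned in the introduction.
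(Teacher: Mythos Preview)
Your proposal is correct and matches the paper's own proof essentially line for line: take $\mathscr{L}$ to be the hexagonal lattice (so ${\rm density}(\mathscr{L})\cdot\frac{\pi}{4}=\frac{\pi}{\sqrt{12}}$), apply the preceding theorem to get $\delta^{+}(\mathcal{C})\geq\frac{\pi}{\sqrt{12}}$, and conclude equality via the Bezdek--Kuperberg upper bound.
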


\begin{proof}
If we take $\mathscr{L}$ to be the hexagonal lattice, then  we get that there exist a cylinder packing C such that on the one hand, $\delta^{+}({\mathcal C})\geq{\frac{\pi}{\sqrt{12}}}$, and on the other hand (see \cite{bezdek1990maximum}),  $\delta^{+}({\mathcal C})\leq{\frac{\pi}{\sqrt{12}}}$. Thus, $\delta^{+}({\mathcal C})={\frac{\pi}{\sqrt{12}}}$, as desired.
\end{proof}

\section{Conclusions and directions for future study}
In this paper we showed that the maximal value of $\delta^{+}$ and $(\delta^{*})^{+}$ for non-parallel cylinder packing is $\frac{\pi}{\sqrt{12}}$, and for every $\varepsilon>0$ there exist a non-parallel cylinder packing, $\mathcal C=\{C_{i}\}_{i=1}^{\infty}$ with $\delta^{-}(\mathcal C)>\frac{\pi}{6}-\varepsilon$.

One interesting question is what is the maximal value of $\delta^{-}$ for non-parallel cylinder packing(or supremum if maximum does not exist) and whether we can achieve the bound of $\frac{\pi}{\sqrt{12}}$.
Another interesting question is whether a non-parallel cylinder packing  $\mathcal C=\{C_{i}\}_{i=1}^{\infty}$ exists, with $(\delta^{*})^{-}(\mathcal C)>0$.\newline 

\textbf{Acknowledgments}\newline
This paper is based on my M.Sc  thesis at Tel Aviv University under the supervision of Prof. Barak Weiss.

I am deeply grateful to Prof. Barak Weiss, whose experience and guidance have  enriched me. His insights and advice were indispensable for this work. 

I would also like to thank to Andrei Lacob for dedicating his time to review this work and provide helpful comments.

This research was supported by the Israel Science Foundation 2919/19.\newline

\bibliographystyle{unsrt}
\bibliography{name.bib}

\end{document}